\newcommand*{\addFileDependency}[1]{%
  \typeout{(#1)}
  \@addtofilelist{#1}
  \IfFileExists{#1}{}{\typeout{No file #1.}}
}
\newcommand*{\myexternaldocument}[1]{%
    \externaldocument{#1}%
    \addFileDependency{#1.tex}%
    \addFileDependency{#1.aux}%
}
\newcommand{\norm}[1]{\left\lVert#1\right\rVert}
\newcommand{\ip}[2]{\langle #1,#2 \rangle}
\newcommand{\pde}[1]{\frac{\partial}{\partial #1}}
\newcommand{\distas}[1]{\mathbin{\overset{#1}{\kern\z@\sim}}}%
\newtheorem{theorem}{Theorem}
\newtheorem{lemma}{Lemma}
\newtheorem{remark}{Remark}
\newtheorem{cor}{Corollary}
\newtheorem{proposition}{Proposition}
\newcommand{\bas}[1]{\begin{align*}#1\end{align*}}
\newcommand{\ba}[1]{\begin{align}#1\end{align}}
\newcommand{\cL}{\mathcal{L}}
\newcommand{\kl}{\text{KL}}
\newcommand{\beqs}{\vspace{0mm}\begin{eqnarray}}
\newcommand{\eeqs}{\vspace{0mm}\end{eqnarray}}
\newcommand{\barr}{\begin{array}}
\newcommand{\earr}{\end{array}}
\newcommand{\E}{\mathbb{E}}
\definecolor{alizarin}{rgb}{0.82, 0.1, 0.26}
\newcommand{\half}{\frac{1}{2}}
\newcommand{\cone}{\mathbf{1}_{C_1}}
\newcommand{\ctwo}{\mathbf{1}_{C_2}}
\newcommand{\cpone}{\mathbf{1}_{C'_1}}
\newcommand{\cptwo}{\mathbf{1}_{C'_2}}
\newcommand{\cgone}{\mathbf{1}_{G_1}}
\newcommand{\cgtwo}{\mathbf{1}_{G_2}}
\newcommand{\one}{\mathbf{1}}
\newcommand{\zero}{\mathbf{0}}
\newcommand{\rd}{\color{red}}
\newcommand{\bk}{\color{black}}
\begin{document}

\twocolumn[
\aistatstitle{A Theoretical Case Study of Structured Variational Inference for Community Detection}
\aistatsauthor{Mingzhang Yin\And Y. X. Rachel Wang \And  Purnamrita Sarkar }
\aistatsaddress{ University of Texas at Austin \And  University of Sydney \And University of Texas at Austin } 
]
\allowdisplaybreaks

\begin{abstract}
 Mean-field variational inference (MFVI) has been widely applied in large scale Bayesian inference. However, MFVI assumes independent distribution on the latent variables, which often leads to objective functions with many local optima, making optimization algorithms sensitive to initialization. In this paper, we study the advantage of  structured variational inference in the context of a simple two-class Stochastic Blockmodel. To facilitate theoretical analysis, the variational distribution is constructed to have a simple pairwise dependency structure on the nodes of the network.  We prove that, in a broad density regime and for general random initializations, unlike MFVI,  the estimated class labels  by structured VI converge to the ground truth with high probability, when the model parameters are known, estimated within a reasonable range or jointly optimized with the variational parameters. In addition, empirically we demonstrate structured VI is more robust compared with MFVI when the graph is sparse and the signal to noise ratio is low. The paper takes a first step towards quantifying the role of added dependency structure in variational inference for community detection.
\end{abstract}

\section{Introduction}
Variational inference (VI) is a widely used technique for approximating complex likelihood functions in Bayesian learning~\citep{Jordan:1999:VM,Blei:2003:LDA,Jaakkola:1999:IMF:308574.308663}, and is known for its computational scalability. VI reduces an intractable posterior inference problem to an optimization framework by imposing simpler dependence structure and is considered a popular alternative to Markov chain Monte Carlo (MCMC) methods.  Similar to the Expectation Maximization (EM) algorithm~\citep{dempster1977maximum}, VI works by the basic principle of constructing a tractable lower bound on the complete log-likelihood of a probabilistic model. One of the simplest forms of approximation is mean-field variational inference (MFVI),  where the variational lower bound, also known as ELBO, is computed using the expectation with respect to a product distribution over the latent variables\citep{Blei:2003:LDA, blei2006variational, hoffman2013stochastic}. 
Though VI has achieved great empirical success in probabilistic models, theoretical understanding of its convergence properties is still an open area of research. 

Theoretical studies of variational methods (and similar algorithms that involve iteratively maximizing a lower bound) have drawn significant attention recently (see \citep{balakrishnan2017statistical, xu2016global,  yan2017convergence, yi2014alternating, kwon2018global} for convergence properties of EM). %
For VI, the global optimizer of the variational lower bound is shown to be asymptotically consistent for a number of models including Latent Dirichlet Allocation (LDA)~\citep{Blei:2003:LDA} and Gaussian mixture models~\cite{pati2017statistical}. In~\cite{westling2015beyond} the connection between VI estimates and profile M-estimation is explored and asymptotic consistency is established. In practice, however, it is well known the algorithm is not guaranteed to reach the global optimum and the performance of VI often suffers from local optima~\cite{blei2017variational}. While in some models, convergence to the global optimum can be achieved with appropriate initialization ~\citep{wang2006convergence,awasthi2015some}, understanding convergence with general initialization and the influence of local optima is less studied with a few exceptions~\citep{xu2016global,ghorbani2018instability, mukherjee2018mean}.

In general, despite being computationally scalable, MFVI suffers from many stability issues including symmetry-breaking, multiple local optima, and sensitivity to initialization, which are consequences of the non-convexity of typical mean-field problems~\cite{wainwright2008graphical,jaakkola200110}. %
The independence assumption on latent variables also leads to the underestimation of posterior uncertainty \citep{blei2017variational, pmlr-v80-yin18b}. To address these problems, many studies suggest that modeling the latent dependency structure can expand the variational family under consideration and lead to larger ELBO and more stable convergence~\cite{xing2002generalized, hoffman2015structured, giordano2015linear, tran2015copula, ranganath2016hierarchical, pmlr-v80-yin18b, rezende2015variational, tran2017deep}. However, rigorous theoretical analysis with convergence guarantees in this setting remains largely underexplored.%

In this paper, we aim to study the effect of added dependency structure in a MFVI framework. Since the behavior of the log-likelihood of MFVI is well understood for the very simple two class, equal sized Stochastic Blockmodel (SBM)~\citep{mukherjee2018mean,zhang2017theoretical}, we propose to add a simple pairwise link structure to MFVI in the context of inference for SBMs. 
We study how added dependency structure can improve MFVI. In particular, we focus on how random initialization behave for VI with added structure.

The stochastic blockmodel (SBM)~\cite{holland1983stochastic} is a widely used network model for community detection in networks. There are a plethora of algorithms with theoretical guarantees for estimation for SBMs like Spectral methods~\citep{rohe2011spectral,coja2010graph}, semidefinite relaxation based methods~\citep{guedon2016community,perry2017semidefinite,amini2018semidefinite}, likelihood-based methods~\citep{amini2013pseudo}, modularity based methods~\citep{snijers1997mcmc,newman2004finding,bickel2009nonparametric}. Among these, likelihood-based methods remain important and relevant due to their flexibility in 
incorporating additional model structures. Examples include mixed membership SBM~\cite{airoldi2008mixed}, networks with node covariates~\cite{razaee2019matched}, and dynamic networks~\cite{matias2017statistical}. Among likelihood based methods,  VI provides a tractable approximation to the log-likelihood and is a scalable alternative to more expensive methods like Profile Likelihood~\citep{bickel2009nonparametric}, or MCMC based methods~\citep{snijers1997mcmc,newman2004finding}.   Computationally, VI was also shown to scale up well to very large graphs \cite{gopalan2013efficient}. 

On the theoretical front, \citep{bickel2013asymptotic} proved that the global optimum of MFVI behaves optimally in the dense degree regime. %
In terms of algorithm convergence, \citep{zhang2017theoretical} showed the batch coordinate ascent algorithm (BCAVI) for optimizing the mean-field objective has guaranteed convergence if the initialization is sufficiently close to the ground truth. \citep{mukherjee2018mean} fully characterized the optimization landscape and convergence regions of BCAVI for a simple two-class SBM with random initializations. It is shown that uninformative initializations can indeed converge to suboptimal local optima, demonstrating the limitations of the MFVI objective function.

Coming back to structured variational inference, it is important to note that, if one added dependencies  between the posterior of each node, the natural approximate inference method is the belief propagation (BP) algorithm \cite{pearl1982reverend, pearl2014probabilistic, wilinski2019detectability}. Based on empirical evidence, it has been conjectured in~\cite{decelle2011asymptotic} that BP is asymptotically optimal for a simple two-class SBM. In the sparse setting where phase transition occurs,~\cite{mossel2016belief} analyzed a local variant of BP and showed it is optimal given a specific initialization. 
In other parameter regions, rigorous theoretical understanding of BP, in particular, how adding dependence structure can improve convergence with general initializations is still an open problem.

Motivated by the above observations, we present a theoretical case study of structured variational inference for SBM. We emphasize here that our primary contribution \textit{does not} lie in proposing a new estimation algorithm that outperforms state-of-the-art methods; rather we use this algorithm as an example to understand the interplay between a non-convex objective function and an iterative optimization algorithm with respect to random initializations, and compare it with MFVI. We consider a two-class SBM with equal class size, an assumption commonly used in theoretical work~\citep{mossel2016belief, mukherjee2018mean} where the analysis for the simplest case is nontrivial. 

We study structured VI by introducing a simple pairwise dependence structure between randomly paired nodes. By carefully bounding the mean field parameters and their logits in each iteration using a combination of concentration and Littlewood-Offord type anti-concentration  arguments~\cite{erdos1945lemma},  we prove that in a broad density regime and under a fairly general random initialization scheme, the Variational Inference algorithm with Pairwise Structure (VIPS) can converge to the ground truth with probability tending to one, when the parameters are known, estimated within a reasonable range, or updated appropriately (Section~\ref{sec:main}). This is in contrast to MFVI, where convergence only happens for a narrower range of initializations. In addition, VIPS can escape from certain local optima that exist in the MFVI objective. These results highlight the theoretical advantage of the added dependence structure. Empirically, we demonstrate that VIPS is more robust compared to MFVI when the graph is sparse and the signal to noise ratio is low (Section~\ref{sec:exp}). We observe similar trends hold in more general models with unbalanced class sizes and more than two classes. We hope that our analysis for the simple blockmodel setting can shed light on theoretical analysis of algorithms with more general dependence structure such as BP.  %

The paper is organized as follows. Section~\ref{sec:prelim} contains the model definition and introduces VIPS. We present our theoretical results in Section~\ref{sec:main}. Finally in Section~\ref{sec:exp}, we demonstrate the empirical performance of VIPS in contrast to MFVI and other algorithms. We conclude with a discussion on possible generalizations, accompanied by promising empirical results in Section~\ref{sec:discuss}.

\section{Preliminaries and Proposed Work}
\label{sec:prelim}
\subsection{Preliminaries}
The stochastic block model (SBM)  is a generative network model with community structure. A $K$-community SBM for $n$ nodes is generated as follows: each node is assigned to one of the communities in $\{1,\dots, K\}$ according to a Multinomial distribution with parameter $\pi$. These memberships are represented by $U\in \{0,1\}^{n\times K}$, where each row follows an independent Multinomial $(1;\pi)$ distribution. We have $U_{ik}=1$ if node $i$ belongs to community $k$ and $\sum_{k=1}^{K}U_{ik}=1$. Given the community memberships, links between pairs of nodes are generated according to the entries in a $K\times K$ connectivity matrix $B$. That is, if $A$ denotes the $n\times n$ binary symmetric adjacency matrix, then, for $i\neq j$,
\ba{
P(A_{ij}=1 | U_{ik}=1, U_{j\ell}=1) = B_{k\ell}.
\label{eq:likelihood_0}
}
We consider undirected networks, where both $B$ and $A$ are symmetric. Given an observed $A$, the goal is to infer the latent community labels $U$ and the model parameters $(\pi, B)$. Since the data likelihood $P(A;B,\pi)$ requires summing over $K^n$ possible labels, approximations such as MFVI are often needed to produce computationally tractable algorithms. 

Throughout the rest of the paper, we will use $\one_n$ to denote the all-one vector of length $n$. When it is clear from the context, we will drop the subscript $n$. Let $I$ be the identity matrix and $J=\mathbf{1}\mathbf{1}^T$.  $\mathbf{1}_C$ denotes a vector where the $i$-th element is $1$ if $i \in C$ and 0 otherwise, where $C$ is some index set. Similar to~\cite{mukherjee2018mean}, we consider a two-class SBM with equal class size, where $K=2$, $\pi=1/2$, and $B$ takes the form $B_{11}=B_{22} = p$, $B_{12}=B_{21}=q$, with $p>q$. We denote the two true underlying communities by $G_1$ and $G_2$, where $G_1, G_2$ form a partition of $\{1,2,\ldots,n\}$ and  $|G_1|=|G_2|$. (For convenience, we assume $n$ is even.) As will become clear, the full analysis of structured VI in this simple case is highly nontrivial.

\subsection{Variational inference with pairwise structure (VIPS)}

The well-known MFVI approximates the likelihood by assuming a product distribution over the latent variables. In other words, the posterior label distribution of the nodes is assumed to be independent in the variational distribution. To investigate how introducing dependence structure can help with the inference, we focus on a simple setting of linked pairs which are independent of each other. %
To be concrete, we randomly partition the $n$ nodes into two sets: $P_1 = \{z_1,\cdots,z_m\}$, $P_2 = \{y_1,\cdots,y_m\}$, with $m = n/2$. Here $z_k, y_k\in\{1, \dots, n\}$ are the node indices. In our structured variational distribution, we label pairs of nodes $(z_k, y_k)$ using index $k\in\{1,\ldots,m\}$ and assume there is dependence within each pair.
The corresponding membership matrices for $P_1$ and $P_2$ are denoted by $Z$ and $Y$ respectively, which are both $m \times 2$ sub-matrices of the full membership matrix $U$. More explicitly, the $k^{th}$ row of matrix $Z$ encodes the membership of node $z_k$ in $P_1$, and similarly for $Y$.
For convenience, we permute both the rows and columns of $A$ based on the node ordering in $P_1$ followed by that in $P_2$ to create a partitioned matrix: 
$A=\left[
\begin{array}{c|c}
A^{zz} & A^{zy} \\
\hline
A^{yz} & A^{yy}
\end{array}
\right]$,
where each block is an $m \times m$ matrix. Given the latent membership variable $(Z, Y)$, by Eq.~\eqref{eq:likelihood_0} the likelihood of $A$ is given by
\ba{
& \textstyle  P(A^{zz}_{ij}  | Z,B) = \prod_{a,b} [B_{ab}^{A_{ij}^{zz}}(1-B_{ab})^{1-A_{ij}^{zz}}]^{Z_{ia}Z_{jb}} \notag \\
&\textstyle  P(A^{zy}_{ij}  | Y,Z,B) = \prod_{a,b} [B_{ab}^{A_{ij}^{zy}}(1-B_{ab})^{1-A_{ij}^{zy}}]^{Z_{ia}Y_{jb}}\notag \\
&\textstyle  P(A^{yy}_{ij}  | Y,B) = \prod_{a,b} [B_{ab}^{A_{ij}^{yy}}(1-B_{ab})^{1-A_{ij}^{yy}}]^{Y_{ia}Y_{jb}}
\label{eq:likelihood}
}
where $a,b \in \{1,2\}$ and $A^{zy} = (A^{yz})^T$. %

A simple illustration of the partition and how ordered pairs of nodes are linked to incorporate dependence is given in Figure~\ref{fig:demo}, where the the true underlying communities $G_1$ and $G_2$ are shaded differently. After the partition, we have $m$ pairs of linked nodes indexed from 1 to $m$. For convenience of analysis, we define the following sets for these pairs of linked nodes, as illustrated in Figure~\ref{fig:demo}. %

Define $C_1$, ($C_1'$) as the set of indices $i$ of pairs $(z_i,y_i)$ with $z_i\in G_1$, ($y_i\in G_1$). Similarly, $C_2$, ($C_2'$) is the set of indices of pairs $(z_i,y_i)$ with $z_i\in G_2$, ($y_i\in G_2$). %
We will also make use of the sets $C_{ab} \coloneqq C_{a} \cap C'_{b}$, where $a,b \in \{1,2\}$. In Figure~\ref{fig:demo}, these sets correspond to different combinations of shading, i.e. community memberships, of the linked pairs, e.g. $C_{12}$ is the index set of pairs $(z_i,y_i)$ with $z_i\in G_1,y_i\in G_2$. %

We define the variational distribution for the latent membership matrix $(Z,Y)$ as $Q(Z,Y)$, which we assume takes the form
\begin{align}
   Q(Z,Y) = \prod_{i=1}^m Q(Z_i,Y_i), 
\end{align}
where $Z_i$ denotes the $i^{th}$ row of $Z$, and $Q(Z_i,Y_i)$ is a general categorical distribution with variational parameters defined as follows. 
 \begin{align*}
   \psi_i^{cd} \coloneqq Q(Z_{i,c+1}=1, Y_{i,d+1}=1),
\end{align*}
for $i\in\{1, \dots, m\}, c,d\in\{0,1\}$.
 This allows one to encode more dependence structure between the posteriors at different nodes than vanilla MFVI, since we allow for dependence within each linked pair of nodes while keeping independence between different pairs. We define the marginal probabilities as:
  \begin{align}
    \phi_i &\coloneqq Q(Z_{i1}=1)=\psi_i^{10}+\psi_i^{11} \nonumber\\
    \xi_i &\coloneqq Q(Y_{i1}=1)=\psi_i^{01}+\psi_i^{11}.
    \label{eq:marginal}
  \end{align} 
\begin{figure}[ht]
\includegraphics[width=8cm]{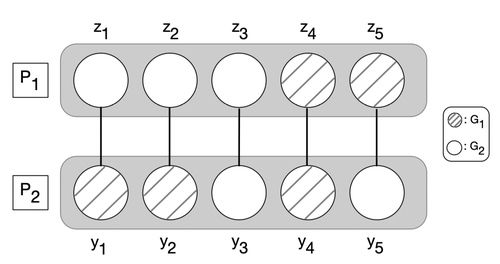}
\vspace{-5pt}
\caption{
  An illustration of a partition for $n=10$. The shaded nodes belong to community $G_1$ and unshaded nodes belong to community $G_2$. The nodes are randomly partitioned into two sets $P_1$ and $P_2$, and pairs of nodes are linked from index 1 to $m$. Dependence structure within each linked pair is incorporated into the variational distribution $Q(Z,Y)$.  For this partition and pair linking, $C_1 = \{4,5\}$, $C_2 = \{1,2,3\}$, $C'_1 = \{1,2,4\}$, $C'_2 = \{3,5\}$; $C_{11} = \{4\}$, $C_{12} = \{5\}$, $C_{21} = \{1,2\}$, $C_{22} = \{3 \}$.
    } \label{fig:demo}
\end{figure}
Next we derive the ELBO on the data log-likelihood $\log P(A)$ using $Q(Z,Y)$. For pairwise structured variational inference (VIPS), ELBO takes the form
\bas{
\cL(Q;\pi,B) =& \E_{Z,Y \sim Q(Z,Y)} \log  P(A | Z, Y) \\
-& \kl(Q(Z,Y)||P(Z,Y)),
}
where $P(Z,Y)$ is the probability of community labels from SBM and follows independent Bernoulli $(\pi)$ distribution, $\kl(\cdot || \cdot)$ denotes the usual Kullback–Leibler divergence between two distributions. Using the likelihood in Eq.~\eqref{eq:likelihood}, the ELBO becomes
\ba{ \notag
\cL(Q;\pi,B) =&  \half \E_Q \sum_{i\neq j,a,b} Z_{ia}Z_{jb}(A_{ij}^{zz}\alpha_{ab}+f(\alpha_{ab}))\\ \notag
+& \half \E_Q \sum_{i\neq j,a,b}Y_{ia}Y_{jb}(A_{ij}^{yy}\alpha_{ab}+f(\alpha_{ab})) \\ \notag
+& \E_Q \sum_{i\neq j,a,b} Z_{ia}Y_{jb}(A_{ij}^{zy}\alpha_{ab}+f(\alpha_{ab}))\\ \notag
+&\E_Q \sum_{i,a,b}Z_{ia}Y_{ib}(A_{ii}^{zy}\alpha_{ab}+f(\alpha_{ab})) \\
-&   \sum_{i=1}^m \kl(Q(z_i,y_i) || P(z_i)P(y_i)),
\label{eq:elbo}
}
where $\alpha_{ab} = \log(B_{ab}/(1-B_{ab}))$ and $f(\alpha)=-\log(1+e^{\alpha})$. The KL regularization term can be computed as
\ba{   
 &\textstyle \kl(Q(z_i,y_i) || P(z_i)P(y_i))  \notag \\
 =& \textstyle \psi_i^{00}\log\frac{\psi_i^{00}}{(1-\pi)^2} +\psi_i^{01}\log\frac{\psi_i^{01}}{\pi(1-\pi)}  \notag  \\
 &~~~~~~~~~~~~~~~~~~+\psi_i^{10}\log\frac{\psi_i^{10}}{\pi(1-\pi)}  +\psi_i^{11}\log\frac{\psi_i^{11}}{\pi^2} \notag  \\ 
=&\textstyle \sum_{0\leq c,d \leq 1} \psi_i^{cd}\log(\psi_i^{cd})/(\pi^c\pi^d(1-\pi)^{1-c}(1-\pi)^{1-d}).
\label{eq:elbo-kl}
}

Our goal is to maximize $\cL(Q;\pi,B)$ with respect to the variational parameters $\psi_i^{cd}$ for $1\leq i \leq m$. Since $\sum_{c,d}\psi_i^{cd}=1$ for each $i$, it suffices to consider $\psi_i^{10}, \psi_i^{01}$ and $\psi_i^{11}$. By taking derivatives, we can derive a batch coordinate ascent algorithm for updating $\psi^{cd}=(\psi_1^{cd}, \dots, \psi_m^{cd})$. Detailed calculation of the derivatives can be found in Section~\ref{sec:elboderiv} of the Appendix. Recall that $\pi = \half$. Also, define 
\ba{
&t \coloneqq \half\log \frac{p/(1-p)}{q/(1-q)}\qquad \lambda  \coloneqq  \frac{1}{2t}\log \frac{1-q}{1-p},\\ 
&\theta^{cd} \coloneqq \log \frac{\psi^{cd}}{1-\psi^{01}-\psi^{10}-\psi^{11}},
}
where $\theta^{cd}$ are logits, $c,d\in \{0,1\}$ and all the operations are defined \textit{element-wise.}

Given the model parameters $p,q$, the current values of $\psi^{cd}$ and the marginals $\phi=\psi^{10}+\psi^{11}$, $\xi=\psi^{01}+\psi^{11}$ as defined in Eq.~\eqref{eq:marginal},  the updates for $\theta^{cd}$ are given by:
\ba{
\theta^{10} =&  \textstyle 4t[A^{zz} - \lambda(J-I)](\phi - \half \mathbf{1}_m) \notag \\
& + 4t[A^{zy}-\lambda(J-I)-\text{diag}(A^{zy})](\xi - \half \mathbf{1}_m)   \notag\\
& - 2t(\text{diag}(A^{zy})-\lambda I) \mathbf{1}_m,   \label{eq:thetaoz} \\ 
\theta^{01} =& \textstyle 4t[A^{yy} - \lambda(J-I)](\xi - \half \mathbf{1}_m)   \notag \\
& + 4t[A^{yz}-\lambda(J-I)-\text{diag}(A^{yz})](\phi - \half \mathbf{1}_m) \notag  \\
& -2t(\text{diag}(A^{yz})-\lambda I) \mathbf{1}_m, \label{eq:thetazo}  \\  
\theta^{11} =& \textstyle 4t[A^{zz} - \lambda(J-I)](\phi - \half \mathbf{1}_m) \notag \\
&+ 4t[A^{zy}-\lambda(J-I) -\text{diag}(A^{zy})](\xi - \half \mathbf{1}_m) \notag \\ 
& +  \textstyle 4t[A^{yy} - \lambda(J-I)](\xi - \half \mathbf{1}_m) \notag \\
&+ 4t[A^{yz}-\lambda(J-I)-\text{diag}(A^{yz})](\phi - \half \mathbf{1}_m).   \label{eq:thetaoo}
}
Given $\theta^{cd}$, we can update the current values of $\psi^{cd}$ and the corresponding marginal probabilities $\phi$, $\xi$ using element-wise operations as follows:
\begin{align}
&\textstyle \psi^{cd} = \dfrac{e^{\theta^{cd}}}{1+e^{\theta^{01}}+e^{\theta^{11}}+ e^{\theta^{10}}}, ~~ u \coloneqq (\phi, \xi)\notag \\
&\textstyle \phi = \dfrac{e^{\theta^{10}}+e^{\theta^{11}}}{1+e^{\theta^{10}}+e^{\theta^{01}}+e^{\theta^{11}}}, ~~\xi = \dfrac{e^{\theta^{01}}+e^{\theta^{11}}}{1+e^{\theta^{10}}+e^{\theta^{01}}+e^{\theta^{11}}},
\label{eq:phixi}
\end{align}
where $(c,d)=(1,0),(0,1), (1,1)$. The marginal probabilities are concatenated as $u = (\phi, \xi)\in [0,1]^n$. Thus $u$ can be interpreted as the estimated posterior membership probability of all the nodes. 

Since $\theta^{cd}$ determines $\psi^{cd}$ in the categorical distribution and $u$ represents the corresponding marginals, one can think of $\theta^{cd}$ and $u$ as the local and global parameters respectively. It has been empirically shown that the structured variational methods can achieve better convergence property by iteratively updating the local and global parameters \citep{Blei:2003:LDA, hoffman2013stochastic, hoffman2015structured}. In the same spirit, in the full optimization algorithm, we update the parameters $\theta^{cd}$ and $u$
iteratively by
 \eqref{eq:thetaoz}--\eqref{eq:phixi}, following the order
\ba{
\theta^{10} \to u \to \theta^{01} \to u \to \theta^{11} \to u \to  \theta^{10}  \cdots. 
\label{eq:update_rule}
}
We call a full update of all the parameters $\theta^{10}, \theta^{01}, \theta^{11}, u$  in \eqref{eq:update_rule} as one \textit{meta iteration} which consists of three inner iterations of $u$ updates.  We use $u_j^{(k)}$ ($j=1,2,3$) to denote the update in the $j$-th iteration of the $k$-th meta iteration, and $u^{(0)}$ to denote the initialization.   Algorithm~\ref{alg:M1} gives the full algorithm when the model parameters are known.

\begin{algorithm}[H] 
\small{
\SetKwData{Left}{left}\SetKwData{This}{this}\SetKwData{Up}{up}
\SetKwFunction{Union}{Union}\SetKwFunction{FindCompress}{FindCompress}
\SetKwInOut{Input}{input}\SetKwInOut{Output}{output}
\Input{
Adjacency matrix $A \in \{0,1\}^{n \times n}$, model parameter $p, q, \pi=1/2$. 
}
\Output{The estimated node membership vector $u$. }
\BlankLine
Initialize the elements of $u$ i.i.d. from an arbitrary distribution $f_{\mu}$ defined on $[0,1]$ with mean $\mu$. Initialize $\theta^{10} = \theta^{01} = \theta^{11} = \mathbf{0}$;

Randomly select $n/2$ nodes as $P_1$  and the other $n/2$ nodes as $P_2$;

\While{not converged}{
Update $\theta^{10}$ by \eqref{eq:thetaoz}.

Update $u = (\phi,\xi)$ by \eqref{eq:phixi}

Update $\theta^{01}$ by \eqref{eq:thetazo}.

Update $u = (\phi,\xi)$ by \eqref{eq:phixi}

Update $\theta^{11}$ by \eqref{eq:thetaoo}.

Update $u = (\phi,\xi)$ by \eqref{eq:phixi}

}
\caption{Variational Inference with Pairwise Structure (VIPS)}
\label{alg:M1}}
\end{algorithm}

\begin{remark}
So far we have derived the updates and described the optimization algorithm when the true parameters $p, q$ are known. When they are unknown, they can be updated jointly with the variational parameters after each meta iteration as 
\ba{
p =& \frac{  \splitdfrac{(\one_n-u)^T A (\one_n-u) + u^T A u}{ + 2(\one_m-\psi^{10}-\psi^{01})^T \text{diag}(A^{zy})\one_m}}{\splitdfrac{(\one_n-u)^T (J-I) (\one_n-u)} {+ u^T (J-I) u  + 2(\one_m-\psi^{10}-\psi^{01})^T \one_m}} \notag \\
q =& \frac{(\one_n-u)^T A u + (\psi^{10}+\psi^{01})^T \text{diag}(A^{zy})\one_m}{(\one_n-u)^T (J-I) u_n   + (\psi^{10}+\psi^{01})^T \one_m } 
\label{eq:hatpq}
}
Although it is typical to update $p,q$ and $u$ jointly, as shown in \cite{mukherjee2018mean}, analyzing MFVI updates with known parameters can shed light on the convergence behavior of the algorithm. Initializing $u$ randomly while jointly updating $p,q$ always leads MFVI to an uninformative local optima. For this reason, in what follows we will analyze Algorithm \ref{alg:M1} in the context of both fixed and updating parameters $p,q$. 
\label{rem:update_pq}
\end{remark}

\section{Main results}
\label{sec:main}
In this section, we present theoretical analysis of the algorithm in three settings: (i) When the parameters are set to the true model parameters $p,q$; (ii) When the parameters are not too far from the true values, and are held fixed throughout the updates; (iii) Starting from some reasonable guesses of the parameters, they are jointly updated with latent membership estimates.

In the following analysis, we will frequently use the eigen-decomposition of the expected adjacency matrix $P = \E[A|U] =  \frac{p+q}{2} \mathbf{1}_n \mathbf{1}_n^T + \frac{p-q}{2} v_2v_2^T  - p I$  where $v_2 = (v_{21}, v_{22})^T = (\cone - \ctwo, \cpone - \cptwo)^T$ is the second eigenvector. Since the second eigenvector is just a shifted and scaled version of the membership vector, the projection $|\ip{u}{v_2}|$ is equivalent to the $\ell_1$ error from true label $z^*$ (up-to label permutation) by $\norm{u - z^*}_1 = m - |\ip{u}{v_2}|$. 
We consider the parametrization $p \asymp q \asymp \rho_n$, where the density $\rho_n\to 0$ at some rate and $p-q=\Omega(\rho_n)$.

When the true parameters $p,q$ are known, it has been shown \citep{purna2019mf} that without dependency structure, MFVI with random initializations converges to the stationary points with non-negligible probability. When the variational distribution has a simple pairwise dependency structure as VIPS, we show a stronger result. To be concrete, in this setting, we establish that convergence happens with probability approaching 1. In addition, unlike MFVI, the convergence holds for general random initializations. We will first consider the situation when $u^{(0)}$ is initialized from a distribution centered at $\mu = \half$ and show the results for $\mu \neq \half$ in Corollary \ref{cor:not-half}.

\begin{theorem}[Sample behavior for known parameters]
Assume $ \theta^{10}, \theta^{01}, \theta^{11}$ are initialized as $ \zero$  and the elements of $u^{(0)}=(\phi^{(0)},\xi^{(0)})$ are initialized i.i.d. from $\text{Bernoulli}(\frac{1}{2})$. When $p \asymp q \asymp \rho_n$, $p-q=\Omega(\rho_n)$,  and $\sqrt{n}\rho_n = \Omega(\log(n))$, Algorithm~\ref{alg:M1} converges to the true labels asymptotically after the second meta iteration, in the sense that
\bas{
\|u_3^{(2)} - z^*\|_1 =  n\exp(-\Omega_P(n\rho_n)))
}
$z^*$ are the true labels with $z^* = \cgone$ or $\cgtwo$. The same convergence holds for all the later iterations.
\label{thm: convergence}
\end{theorem}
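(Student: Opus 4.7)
The plan is to split the analysis into two stages corresponding to the two meta iterations: the first converts random initialization into a vector $u^{(1)}$ with macroscopic correlation with the true label direction $v_2$, and the second converts this crude correlation into exponentially small recovery error. For setup, I condition on the true partition $U$ and decompose $A - \lambda(J-I) = M_* + W$, where $M_* \approx \tfrac{p-q}{2}\,v_2 v_2^T$ is the rank-one signal (using the Taylor expansion $\lambda = (p+q)/2 + O(\rho_n^2)$) and $W = A - P$ is a mean-zero noise matrix with $\|W\|_{\mathrm{op}} = O(\sqrt{n\rho_n})$ by standard Bernoulli-graph concentration.

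For Stage 1 (first meta iteration), I exploit the fact that $\phi^{(0)} - \tfrac12\mathbf{1}$ and $\xi^{(0)} - \tfrac12\mathbf{1}$ have i.i.d. $\pm\tfrac12$ entries: a Berry--Esseen / Littlewood--Offord anti-concentration argument gives $|\langle v_{21}, \phi^{(0)} - \tfrac12\mathbf{1}\rangle|$ and $|\langle v_{22}, \xi^{(0)} - \tfrac12\mathbf{1}\rangle|$ at least $\sqrt{n}/\log n$ with probability $1 - o(1)$. Substituting into \eqref{eq:thetaoz} decomposes $\theta^{10}$ into a signal along $v_{21}$ of per-entry magnitude $\Omega(\rho_n\sqrt{n}/\log n)$ plus a noise term whose $\ell_2$ norm is $O(n\sqrt{\rho_n})$. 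Although per-entry SNR is tiny, the aggregate projection $\langle \theta^{10}, v_{21}\rangle$ has SNR $\Omega(\sqrt{n\rho_n}/\log n) \to \infty$; passing through the sigmoid in \eqref{eq:phixi} then yields $\langle u_1^{(1)} - \tfrac12\mathbf{1}, v_{21}\rangle \gtrsim n/\log n$. Repeating the same argument for the $\theta^{01}$ and $\theta^{11}$ inner updates (now using the cross-block $A^{zy},A^{yz}$ and $A^{yy}$) produces $u_3^{(1)}$ satisfying $|\langle u_3^{(1)} - \tfrac12\mathbf{1}, v_2\rangle| \gtrsim n/\log n$.

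For Stage 2 (second meta iteration), this order-$n/\log n$ correlation boosts the signal in $\theta^{10}_i$ to $\Omega(\rho_n n / \log n)\cdot v_{21,i}$, while the noise $(W u^{(1)})_i$ remains $O(\sqrt{n\rho_n\log n})$ per entry by a Bernstein-type entrywise bound that uses the $\ell_\infty$ control on $u^{(1)}$ together with $\|W\|_{\mathrm{op}} = O(\sqrt{n\rho_n})$. The per-entry SNR is then $\Omega(\sqrt{n\rho_n}/\mathrm{polylog}(n)) \to \infty$ under the density assumption $\sqrt{n}\rho_n = \Omega(\log n)$, so by Bernstein the probability $\theta^{10}_i$ has the wrong sign is $\exp(-\Omega(n\rho_n))$ per entry. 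Union-bounding over the $n$ entries and three inner iterations yields $\|u_3^{(2)} - z^*\|_1 = n\exp(-\Omega_P(n\rho_n))$. A separate argument then checks that any subsequent meta iteration maintains this level of accuracy, since once per-entry SNR is large, the recovery is self-correcting.

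The main obstacle is the passage through the sigmoid nonlinearity in Stage 1: since per-entry SNR of $\theta^{10}$ is only $O(\sqrt{\rho_n}) = o(1)$, individual entries of $u_1^{(1)}$ may saturate to $0$ or $1$ based on noisy signs, so a simple linearization of the sigmoid is not valid. The argument must instead lower-bound the aggregate $\langle u_1^{(1)} - \tfrac12\mathbf{1}, v_{21}\rangle$ by a counting/averaging argument on the number of "correctly aligned" entries, making use of both the $M_*$ signal and the coupling between the $\phi$ and $\xi$ updates provided by the pairwise structure; this is precisely where VIPS departs from MFVI, which has no $\xi$ partner to seed a second independent random direction. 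A secondary subtlety is that $u^{(1)}$ is a nontrivial function of $A$, so $W u^{(1)}$ in Stage 2 is not a sum of independent terms; this is handled via the spectral bound on $W$ combined with entrywise control (e.g.\ a leave-one-out decoupling).
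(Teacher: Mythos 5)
Your high-level strategy (anti-concentration to seed a nonzero projection onto $v_2$, then amplification to exponential accuracy) matches the paper's in spirit, but there are concrete gaps. First, the claim that one inner update already gives $\langle u_1^{(1)}-\tfrac12\mathbf{1}, v_{21}\rangle \gtrsim n/\log n$ is unsupported and, in the sparser part of the regime, false. The per-entry signal in $\theta^{10}$ is $s_1$ or $s_2$ with $|s_1-s_2|=\Theta_P(\sqrt{n}\rho_n)$, while the per-entry noise has standard deviation $\sigma_u\asymp\sqrt{n\rho_n}$; the counting argument you allude to (the excess fraction of entries pushed the right way is $\Phi(-s_2/\sigma_u)-\Phi(-s_1/\sigma_u)\asymp\sqrt{\rho_n}$) yields $\langle u_1^{(1)},v_{21}\rangle=\Theta_P(n\sqrt{\rho_n})$, which is $o(n/\log n)$ whenever $\rho_n=o(1/\log^2 n)$. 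The paper then needs the remaining two inner iterations of the first meta iteration --- where the logits, now of size $\Omega_P(n\rho_n^{3/2})$, dominate the uniform noise bound $O_P(\sqrt{n\rho_n\log n})$ --- to bootstrap from $n\sqrt{\rho_n}$ up to a constant fraction of $n$; only then are the second-meta-iteration logits of size $\Omega_P(n\rho_n)$, which is exactly what the target rate $n\exp(-\Omega_P(n\rho_n))$ requires. Your Stage 2 premise of a signal of order $\rho_n n/\log n$ would only deliver $n\exp(-\Omega_P(n\rho_n/\log n))$.

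Second, you treat each update as a scalar sigmoid of a single logit, but the marginal is a softmax over three logits, $\phi=(e^{\theta^{10}}+e^{\theta^{11}})/(1+e^{\theta^{10}}+e^{\theta^{01}}+e^{\theta^{11}})$, whose block-dependent signs conflict (e.g.\ on $C_{12}$ the $\theta^{10}$ signal is $s_1>0$ while $\theta^{01}$ carries $x_2<0$ and $\theta^{11}$ carries $y_1+y_2\approx 0$). Correct convergence of $\phi,\xi$ therefore hinges on cross-logit conditions such as $2y_1^{(2)}-s_1^{(2)}=\Omega_P(n\rho_n)$ and $s_1^{(2)}-(y_1^{(2)}+y_2^{(2)})=\Omega_P(n\rho_n)$, which in turn require tracking $\langle u,\mathbf{1}\rangle-m$ alongside $\langle u,v_2\rangle$, since the blockwise signals are $(\tfrac{p+q}{2}-\lambda)(\langle u,\mathbf{1}\rangle-m)\pm\tfrac{p-q}{2}\langle u,v_2\rangle$ and an unbalanced iterate can swamp the community signal. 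None of this appears in your outline. Finally, the dependence of the noise $Wu^{(k)}$ on $A$ is dispatched with a one-line mention of leave-one-out decoupling; the paper avoids the issue entirely by splitting $A$ into six independent subgraphs for the first six iterations, which is what legitimizes the Berry--Esseen and uniform noise bounds at each step. Your route could perhaps be made to work, but as written these steps are placeholders rather than proofs.
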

\begin{remark}
It is important to note that there are many algorithms (see ~\cite{abbe2017community} for a survey) which recover the memberships exactly in this regime. We do not compare our theoretical results with those or to well known thresholds for exact recovery~\cite{abbe2015exact}, because our goal is not to design a new algorithm with an improved theoretical guarantees. Rather, we show that by introducing the simplest possible pairwise dependence structure, variational inference for a simple setting of a SBM improves over MFVI which has no such structure. The density regime simply makes the analysis somewhat easier.
\end{remark}
\begin{proof} We provide a proof sketch here and defer the details to Section~\ref{sec:proofmain} of the Appendix. We assume for the first six iterations, we randomly partition $A$ into six $A^{(i)}, i=0, \dots, 5$ by assigning each edge to one of the six subgraphs with equal probability. For the later iterations, we can use the whole graph $A$. Then $A^{(i)}$'s are independent with population matrix $P/6$. Although not used in Algorithm~\ref{alg:M1}, the graph splitting is a widely used technique for theoretical convenience \cite{mcsherry2001spectral,chaudhuri2012} and allows us to bound the noise in each iteration more easily. The main arguments involve lower bounding the size of the projection $|\ip{u}{v_2}|$ in each iteration as it increases towards $n/2$, at which point the algorithm achieves strong consistency. For ease of exposition, we will scale everything by $6$ so that $p,q,\lambda$ correspond to the parameters for the full un-split matrix $P$. This does not affect the analysis in any way.

 In each iteration, we decompose the intermediate $\theta^{10}, \theta^{01}, \theta^{11}$ into blockwise constant signal and random noise using the spectral property of the population matrix $P$. As an illustration, in the first meta iteration, 
we write the update in \eqref{eq:thetaoz}--\eqref{eq:thetaoo} as signal plus noise,
\bas{
&\theta_i^{10} = 4t(s_1 \cone + s_2 \ctwo +  r_i^{(0)}) \\
&\theta_i^{01} = 4t(x_1 \cpone + x_2 \cptwo +  r_i^{(1)}) \\
&\theta_i^{11} = 4t(y_1 \cone + y_2 \ctwo +y_1 \cpone + y_2 \cptwo +  r_i^{(2)})
}
where $t$ is a constant and the noise has the form
\ba{
r^{(i)} = R^{(i)}(u_j^{(k)} - \half\one)
\label{eq:r_i}
}
for appropriate $j,k$, where $R^{(i)}$ arises from the sample noise in the adjacency matrix. We handle the noise from the first iteration $r^{(0)}$ with a Berry-Esseen bound conditional on $u^{(0)}$, and the later $r^{(i)}$ with a uniform bound. The blockwise constant signals $s_1, x_1, y_1$ are updated as $( \frac{p+q}{2} -\lambda)(\ip{u}{\mathbf{1}_n}-m)  + (  \frac{p-q}{2}) \ip{u}{v_2}$ and $s_2, x_2, y_2$ are updated as $( \frac{p+q}{2} -\lambda) (\ip{u}{\mathbf{1}_n}-m)  - (  \frac{p-q}{2}) \ip{u}{v_2}$. As $\ip{u}{v_2}$ increases throughout the iterations, the signals become increasingly separated for the two communities. Using Littlewood-Offord type anti-concentration, we show in the first meta iteration, 
\ba{
&\ip{u_1^{(1)}}{v_{2}} = \Omega_P(n \sqrt{\rho_n}), \quad\ip{u_1^{(1)}}{\one} - m= 0 \notag\\
&\ip{u_2^{(1)}}{v_2} \geq \frac{n}{8}  - o_P(n), \quad \ip{u_2^{(1)} }{\one} - m = 0 \notag \\
&\ip{u_3^{(1)}}{v_2} \geq  \frac{1}{4}n  + o_P(n), \notag\\
& -\frac{n}{8}-o_P(n) \leq \ip{u_3^{(1)}}{\one}- m \leq \frac{n}{4}  + o_P(n)
\label{eq:signal}
} 
After the second meta iteration we have %
\begin{equation}\label{eq:conditions}
\begin{gathered}
s_1^{(2)}, x_1^{(2)} ,y_1^{(2)} = \Omega_P(n \rho_n), \\
s_2^{(2)},  x_2^{(2)}, y_2^{(2)} = - \Omega_P(n  \rho_n); \\
2y_1^{(2)} - s_1^{(2)} = \Omega_P(n \rho_n),\\
2y_1^{(2)} - x_1^{(2)} = \Omega_P(n \rho_n); \\
s_1^{(2)} - (y_1^{(2)} + y_2^{(2)}) = \Omega_P(n\rho_n), \\
x_1^{(2)} - (y_1^{(2)} + y_2^{(2)})  = \Omega_P(n\rho_n);
\end{gathered}
\end{equation}
Plugging \eqref{eq:conditions} to \eqref{eq:phixi}, we have the desired convergence after the second meta iteration. 
\end{proof}

The next corollary shows the same convergence holds when we use a general random initialization not centered at $1/2$. In contrast, MFVI converges to stationary points $\mathbf{0}_n$ or $\one_n$ with such initializations.
\begin{cor}
Assume the elements of $u^{(0)}$ are i.i.d. sampled from a distribution with mean $\mu \neq 0.5$. Under the  conditions in Theorem~\ref{thm: convergence}, applying Algorithm~\ref{alg:M1} with known $p,q$, we have $\|u_1^{(3)} - z^*\|_1 = n\exp(-\Omega_P(n\rho_n)))$. The same order holds for all the later iterations. 
\label{cor:not-half}
\end{cor}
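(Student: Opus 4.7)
The plan is to reduce Corollary~\ref{cor:not-half} to Theorem~\ref{thm: convergence} by running one additional meta iteration as a ``burn-in'' that absorbs the effect of a non-zero initialization bias. The only structural change compared to the $\mu=1/2$ case is that the marginals $\phi^{(0)},\xi^{(0)}$ now have a systematically non-zero average, so I must track how this bias enters the signal-plus-noise decomposition and verify that it does not destroy the community separation carried by the $v_2$ projection.

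First I would analyze the initial state. Since the entries of $u^{(0)}$ are i.i.d.\ with mean $\mu\neq 1/2$, one has $\ip{u^{(0)}}{\one_n}-m=(2\mu-1)m+O_P(\sqrt{n})$, which is $\Theta(n)$ rather than $o(n)$. On the other hand, because $v_2=(\cone-\ctwo,\,\cpone-\cptwo)^T$ has equal numbers of $+1$ and $-1$ entries, $\E[\ip{u^{(0)}}{v_2}]=0$ and a Berry--Esseen bound gives $|\ip{u^{(0)}}{v_2}|=\Theta_P(\sqrt n)$, just as in the $\mu=1/2$ case. Thus the only difference from the theorem's initialization is a deterministic drift of size $\Theta(n)$ along the $\one_n$ direction.

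Next I would trace this through one meta iteration using the signal-plus-noise decomposition from the proof of Theorem~\ref{thm: convergence}. Plugging the initial state into Eqs.~\eqref{eq:thetaoz}--\eqref{eq:thetaoo}, each of $\theta^{10}$ and $\theta^{01}$ picks up an additive bias $4t(\tfrac{p+q}{2}-\lambda)(\ip{u^{(0)}}{\one_n}-m)=\Theta(n\rho_n(2\mu-1))$ that is constant across $i$, while $\theta^{11}$ gets twice that bias; all other signal terms and the noise term $r^{(i)}$ in \eqref{eq:r_i} retain exactly the same form and magnitude as in the theorem. In particular, the community-separating differences $s_1-s_2$, $x_1-x_2$, $2y_1-s_1$, $2y_1-x_1$ depend only on $\ip{u^{(0)}}{v_2}$ and are still $\Theta_P(\sqrt n\rho_n)$, so the Littlewood--Offord anti-concentration argument applies verbatim to produce $|\ip{u_1^{(1)}}{v_2}|=\Omega_P(n\sqrt{\rho_n})$. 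The bias will in general shift $\ip{u_1^{(1)}}{\one_n}$ away from $m$, but only by a controllable amount, because the softmax in \eqref{eq:phixi} depends on the relative sizes of the $\theta^{cd}$ and the shifts in $\theta^{11}-\theta^{10}$ and $\theta^{11}-\theta^{01}$ are of a known sign and magnitude.

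I would then argue that by the end of the first meta iteration $u_3^{(1)}$ satisfies an analogue of \eqref{eq:signal}: $\ip{u_3^{(1)}}{v_2}\geq \tfrac{n}{4}+o_P(n)$ and $\ip{u_3^{(1)}}{\one_n}-m$ is bounded by a known constant times $n$. This is already as strong as the state reached after $u_3^{(1)}$ in the theorem (the bounds on $\ip{u}{\one_n}-m$ in \eqref{eq:signal} are themselves non-zero). From this point, the second meta iteration replays the arguments in the theorem to establish the analog of \eqref{eq:conditions}, and the third meta iteration's first inner update $u_1^{(3)}$ yields the claimed $\|u_1^{(3)}-z^*\|_1=n\exp(-\Omega_P(n\rho_n))$ via \eqref{eq:phixi}. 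The extra meta iteration is what is needed to ``digest'' the biased initialization; once the magnitude of $\ip{u}{v_2}$ reaches $\Omega_P(n)$, the $(p-q)$-scaled signal dominates any residual effect of $\mu\neq 1/2$.

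The main obstacle I anticipate is the careful bookkeeping of the bias through the softmax step \eqref{eq:phixi}. A constant shift to $\theta^{10},\theta^{01}$ combined with a double shift to $\theta^{11}$ does \emph{not} translate cleanly into a shift of the marginals $\phi,\xi$, so one cannot simply ``factor out'' the bias. One has to verify that in the worst case (e.g.\ $\mu$ close to $0$ or $1$, where the softmax saturates), the community-dependent component of $\theta^{cd}$ is still large enough relative to the bias-induced shift to produce the desired increase in $|\ip{u}{v_2}|$. Handling this via the same concentration and anti-concentration tools used in Theorem~\ref{thm: convergence}, but applied now to $\theta^{cd}$ centered at the bias level rather than at zero, is the technical core of the argument.
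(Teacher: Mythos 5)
There is a genuine gap, and it sits exactly where you flagged your ``main obstacle'': for \emph{any} fixed $\mu\neq 1/2$ (not just $\mu$ near $0$ or $1$), the first iteration saturates and your claim that the Littlewood--Offord argument ``applies verbatim to produce $|\ip{u_1^{(1)}}{v_2}|=\Omega_P(n\sqrt{\rho_n})$'' is false. The bias puts $s_1,s_2=\Theta_P(n\rho_n)$ with a common sign, while the noise scale is $\sigma_u\asymp\sqrt{n\rho_n}$ and the community-separating part $s_1-s_2=(p-q)\ip{u^{(0)}}{v_2}$ is only $\Theta_P(\sqrt{n}\rho_n)$. The first-iteration signal-growth bound \eqref{eq:iter1_finalbound} carries the factor $\exp(-(s_1^2\vee s_2^2)/2\sigma_u^2)$, which is $\Theta(1)$ only when $|s_1|,|s_2|=O(\sigma_u)$; here it is $\exp(-\Omega(n\rho_n))$, so $\Phi(-s_2/\sigma_u)-\Phi(-s_1/\sigma_u)$ is exponentially small and $\ip{\phi^{(1)}_1}{v_{21}}$ collapses to $O_P(\sqrt n)$ rather than growing. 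The community information in $u^{(0)}$ is effectively destroyed by the saturation, so you cannot ``center $\theta^{cd}$ at the bias level'' and rerun the same anti-concentration step; there is nothing left along $v_2$ to anti-concentrate.

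The paper's proof turns the saturation into the mechanism of recovery rather than an obstacle. For $\mu>1/2$, since $s_1,s_2=\Omega_P(n\rho_n)$ dominate the uniform noise bound $O_P(\sqrt{n\rho_n\log n})$, one gets $\phi_i^{(1)}=1-\exp(-\Omega_P(n\rho_n))$ and $\xi_i^{(1)}=\exp(-\Omega_P(n\rho_n))$ for all $i$, i.e.\ $u_1^{(1)}$ is exponentially close to $u'=\one_{[i\in P_1]}$. Because the partition $(P_1,P_2)$ is chosen uniformly at random and independently of the communities, $u'$ is an i.i.d.\ Bernoulli$(1/2)$ vector, so the first inner iteration manufactures a fresh initialization of exactly the type covered by Theorem~\ref{thm: convergence}, and the theorem's analysis restarts from there (with the affine rescaling $3u'-1$ when $\mu<1/2$, where the marginals saturate at $1/3$ and $2/3$ instead). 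This index shift is precisely why convergence lands at $u_1^{(3)}$ rather than $u_3^{(2)}$. Your proposal contains neither the observation that the saturated vector is constant on $P_1$ and $P_2$, nor the use of the randomness of the partition to regenerate a valid initialization, and without that idea the argument does not go through.
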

The proof relies on showing after the first iteration, $u_1^{(1)}$ behaves like nearly independent $\text{Bernoulli}(\frac{1}{2})$, the details of which can be found in Appendix~\ref{sec:proofmain}. 

The next proposition focuses on the behavior of special points in the optimization space for $u$. In particular, we show that Algorithm~\ref{alg:M1} enables us to move away from the stationary points $\zero_n$ and $\one_n$, whereas in MFVI, the optimization algorithm gets trapped in these stationary points \cite{mukherjee2018mean}. 

\begin{proposition} [Escaping from stationary points]  \hfill 
\begin{enumerate}[(i)]
\item 
 $(\psi^{00}, \psi^{01},\psi^{10}, \psi^{11}) = (\one,\mathbf{0},\mathbf{0},\mathbf{0})$,  $(\mathbf{0},\mathbf{0},\mathbf{0},\one)$ (these vectors are $m$-dimensional) are the stationary points of the pairwise structured ELBO when $p,q$ are known, which maps to $u = \mathbf{0}_n$ and $\one_n$ respectively. 
\item With the updates in Algorithm \ref{alg:M1}, when $u^{(0)} = \mathbf{0}_n$, $\one_n$, %
VIPS converges to the true labels with $\|u_1^{(3)} - z^*\|_1 = n\exp(-\Omega_P(n\rho_n)))$.
\end{enumerate}
\label{prop:stat_pt}
\end{proposition}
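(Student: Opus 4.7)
I would verify stationarity at the corner points by a direct KKT computation for the constrained problem $\max_\psi \cL$ subject to $\psi_i\in\Delta^3$. Using \eqref{eq:elbo}--\eqref{eq:elbo-kl}, $\partial\cL/\partial\psi_i^{cd}$ splits into a finite data-fit contribution (from the expected log-likelihood) plus the entropy contribution $-\log(\psi_i^{cd}/(\pi^c\pi^d(1-\pi)^{1-c}(1-\pi)^{1-d}))-1$, which diverges to $+\infty$ as $\psi_i^{cd}\to 0^+$. At each corner $\psi_i=(1,0,0,0)$ and $\psi_i=(0,0,0,1)$, the non-negativity Lagrange multipliers associated with the active constraints $\psi_i^{cd}=0$ can be chosen to absorb this divergence, so the KKT conditions are satisfied. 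The corresponding $u$-values follow immediately from \eqref{eq:marginal}: $\psi_i=(1,0,0,0)$ forces $\phi_i=\xi_i=0$, giving $u=\mathbf{0}_n$; and $\psi_i=(0,0,0,1)$ forces $\phi_i=\xi_i=1$, giving $u=\mathbf{1}_n$.

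\textbf{Part (ii), escape step.} The key distinction from naive coordinate ascent on $\psi$ is that Algorithm~\ref{alg:M1} recomputes the logits $\theta^{cd}$ from $u$ and $A$ via \eqref{eq:thetaoz}--\eqref{eq:thetaoo} at every step, starting from the initialization $\theta^{10}=\theta^{01}=\theta^{11}=\mathbf{0}$. Setting $u^{(0)}=\mathbf{0}_n$ so that $\phi^{(0)}-\tfrac{1}{2}\mathbf{1}_m=\xi^{(0)}-\tfrac{1}{2}\mathbf{1}_m=-\tfrac{1}{2}\mathbf{1}_m$, a direct expansion of \eqref{eq:thetaoz} (in which the $\text{diag}(A^{zy})$ terms cancel) yields the finite vector $\theta^{10}_i=-2t[\text{deg}(z_i)-\lambda(n-1)]$. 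Feeding this into \eqref{eq:phixi} together with $\theta^{01}=\theta^{11}=\mathbf{0}$ gives $\phi_i=(e^{\theta^{10}_i}+1)/(3+e^{\theta^{10}_i})>1/3$ and $\xi_i=2/(3+e^{\theta^{10}_i})>0$, both strictly bounded away from $0$, so $u_1^{(1)}\neq\mathbf{0}_n$ and the algorithm has escaped. The case $u^{(0)}=\mathbf{1}_n$ is handled by the symmetry $\phi,\xi\mapsto 1-\phi,1-\xi$.

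\textbf{Part (ii), quantitative convergence.} To establish $\|u_1^{(3)}-z^*\|_1=n\exp(-\Omega_P(n\rho_n))$, I would adapt the inductive argument of Theorem~\ref{thm: convergence}, inserting one additional step at the beginning to build up a $v_2$-component of $u$ from scratch. The plan is to show that by the end of the first meta iteration, $u_3^{(1)}$ satisfies $|\langle u_3^{(1)},v_2\rangle|=\Omega_P(n\sqrt{\rho_n})$ together with a suitably controlled $|\langle u_3^{(1)},\mathbf{1}_n\rangle-m|$, matching the analogues of \eqref{eq:signal}; the second meta iteration then yields analogues of \eqref{eq:conditions}, and the first step of the third meta iteration produces the claimed exponential error rate through \eqref{eq:phixi}. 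The main obstacle is the first meta iteration: $u^{(0)}-\tfrac{1}{2}\mathbf{1}_n$ is parallel to $\mathbf{1}_n$ and therefore carries no $v_2$-component, and the large common magnitude $|\theta^{10}_i|=\Theta(n\rho_n)$ heavily saturates the sigmoid in \eqref{eq:phixi}. The $v_2$-signal in $u_3^{(1)}$ must therefore be built up through the pairwise diagonal contributions $\text{diag}(A^{zy}),\text{diag}(A^{yz})$ in \eqref{eq:thetazo}--\eqref{eq:thetaoo} --- whose conditional expectations depend systematically on the pair classes $C_{11}\cup C_{22}$ versus $C_{12}\cup C_{21}$ --- and from degree-like fluctuations, isolated via Taylor expansion of the sigmoid around its saturated value together with the Berry--Esseen / anti-concentration techniques used in the proof of Theorem~\ref{thm: convergence}.
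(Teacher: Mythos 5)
Your escape computation in part (ii) is correct ($\theta^{10}_i=-2t[\deg(z_i)-\lambda(n-1)]$, giving $\phi_i\approx 1/3$, $\xi_i\approx 2/3$ for $u^{(0)}=\zero_n$), but both of your other steps diverge from what actually works. For part (i), the KKT justification has a sign problem: the entropy contribution to $\partial\cL/\partial\psi_i^{cd}$ diverges to $+\infty$ as $\psi_i^{cd}\to 0^+$, i.e.\ the gradient points \emph{into} the simplex at each corner, so the multiplier needed to absorb it would have to be negative (indeed $-\infty$), violating dual feasibility; carried out honestly, your argument shows the exact corners are \emph{not} KKT points. The paper instead verifies (approximate) stationarity through the update map: plugging $u=\one_n$ (resp.\ $\zero_n$) into all three logit updates gives $s_1=s_2=x_1=x_2=\tfrac12(\tfrac{p+q}{2}-\lambda)n$ and $y_1=y_2=(\tfrac{p+q}{2}-\lambda)n$, whence $\phi_i,\xi_i=1-\exp(-\Omega_P(n\rho_n))$ (resp.\ $\exp(-\Omega_P(n\rho_n))$), so the corner is a fixed point up to exponentially small error for large $n$; that is the sense of ``stationary'' intended.

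For the quantitative convergence in part (ii) you have identified the right obstacle ($u^{(0)}-\tfrac12\one_n$ has no $v_2$-component) but proposed the wrong mechanism to overcome it. The $v_2$-signal does not come from the $\mathrm{diag}(A^{zy})$ terms or from degree fluctuations Taylor-expanded around the saturated sigmoid --- those corrections are $\exp(-\Omega_P(n\rho_n))$ and contribute nothing usable. The source of signal is the \emph{randomness of the partition} into $P_1$ and $P_2$: after the first iteration only $\theta^{10}$ has been updated, so $u_1^{(1)}$ is exponentially close to a vector $u'$ taking one constant value on $P_1$ and a different constant value on $P_2$ (e.g.\ $\tfrac13$ and $\tfrac23$ when $u^{(0)}=\zero_n$). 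Because $P_1,P_2$ are chosen uniformly at random and independently of $G_1,G_2$, an affine rescaling of $u'$ has i.i.d.\ Bernoulli$(1/2)$ entries relative to the communities, so the Littlewood--Offord anti-concentration of Lemma~\ref{lem:anti_conc} applies to the \emph{second} iteration exactly as it applied to the first iteration under random initialization, and the whole proof of Theorem~\ref{thm: convergence} restarts with a one-iteration shift --- which is precisely why the conclusion is stated for $u_1^{(3)}$ rather than $u_3^{(2)}$. This is the content of Corollary~\ref{cor:not-half} (with $\mu=0$ and $\mu=1$), from which the paper derives part (ii) in one line; without this observation your plan has no way to produce the required $\Omega_P(\sqrt{n}\rho_n)$ separation of the blockwise signals.
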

The above results requires knowing the true $p$ and $q$. The next corollary shows that, even if we do not have access to the true parameters, as long as some reasonable estimates can be obtained, the same convergence as in Theorem \ref{thm: convergence} holds thus demonstrating robustness to misspecified parameters. Here we hold the parameters fixed and only update $u$ as in Algorithm ~\ref{alg:M1}.
\begin{proposition}
[Parameter robustness]
	\label{cor:pq_noise}
	If we replace true $p, q$ with some estimation $\hat{p}, \hat{q}$ in Algorithm ~\ref{alg:M1},
	the same conclusion as in Theorem~\ref{thm: convergence} holds  if
    \begin{center}
	\begin{inparaenum}
		\item $\frac{p+q}{2} > \hat{\lambda}$, \quad
		\item $\hat{\lambda} - q= \Omega(\rho_n)$, \quad
		\item $\hat{t}=\Omega(1) $.
	\end{inparaenum}
	\end{center}
	where $\hat{t} = \half\log \dfrac{\hat{p}/(1-\hat{p})}{\hat{q}/(1-\hat{q})}$, $\hat{\lambda} =\frac{1}{2\hat{t}}\log \dfrac{1-\hat{q}}{1-\hat{p}}$.
\end{proposition}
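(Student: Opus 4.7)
The plan is to reduce Proposition~\ref{cor:pq_noise} to Theorem~\ref{thm: convergence} by inspecting exactly where the model parameters enter the proof of the latter, and verifying that each step survives the substitution $t \to \hat{t}$, $\lambda \to \hat{\lambda}$ under hypotheses (i)--(iii). The key observation is that the noise matrices $R^{(i)}$ in the decomposition $r^{(i)} = R^{(i)}(u - \half\one_n)$ from \eqref{eq:r_i} arise only from the centered sample fluctuations $A - \E[A]$, so they do \emph{not} depend on $\hat{p},\hat{q}$. Only two things change: the uniform prefactor $4\hat{t}$ in \eqref{eq:thetaoz}--\eqref{eq:thetaoo}, and the coefficient $\frac{p+q}{2} - \hat{\lambda}$ on the $\one_n$ direction of the signal; the $v_2$-direction coefficient $\frac{p-q}{2}$ (which reflects the true $\E[A]$) is preserved. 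Combining (i) and (ii) places $\hat{\lambda}$ in the open interval $(q + \Omega(\rho_n),\, \tfrac{p+q}{2})$, so the effective per-edge weights $p-\hat{\lambda}$ and $-(\hat{\lambda}-q)$ both have magnitude $\Theta(\rho_n)$ with opposite signs.

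Second, I would re-verify the first-meta-iteration bounds \eqref{eq:signal}. The Berry--Esseen lower bound $\ip{u_1^{(1)}}{v_2} = \Omega_P(n\sqrt{\rho_n})$ depends only on the conditional variance of $\theta^{10}$ given $u^{(0)}$, which scales entrywise as $\hat{t}\sqrt{n\rho_n}$; combined with hypothesis (iii) that $\hat{t} = \Omega(1)$, the sigmoid in \eqref{eq:phixi} yields the same order of projection onto $v_2$. The subsequent improvements to $\Omega_P(n)$ for $\ip{u_2^{(1)}}{v_2}$ and $\ip{u_3^{(1)}}{v_2}$ carry over because the signal $\hat{t}\cdot\frac{p-q}{2}\ip{u}{v_2}$ and the uniform noise bound $\hat{t}\cdot O_P(\sqrt{n\rho_n})$ both pick up the same factor $\hat{t}$, leaving their ratio --- and hence the inductive step --- unchanged.

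Third, I would re-establish the second-meta-iteration conditions \eqref{eq:conditions}. Using the additive form of $\theta^{11}$ in \eqref{eq:thetaoo}, one has $y_a \approx s_a + x_a$ for $a = 1, 2$, so each difference reduces to a linear combination of the two signal modes; e.g.\ $s_1 - (y_1 + y_2) \approx -(s_2 + x_1 + x_2)$, whose leading term is $\frac{p-q}{2}\ip{u^{(1)}}{v_2} - 3(\frac{p+q}{2} - \hat{\lambda})(\ip{u^{(1)}}{\one_n} - m)$. Hypothesis (ii) gives $\frac{p-q}{2} - (\frac{p+q}{2} - \hat{\lambda}) = \hat{\lambda} - q = \Omega(\rho_n)$, so the $v_2$-direction coefficient strictly dominates the $\one_n$-direction coefficient by $\Omega(\rho_n)$; since $\ip{u^{(1)}}{v_2} \geq n/4$ already controls $|\ip{u^{(1)}}{\one_n} - m| \leq n/4$ by \eqref{eq:signal}, every difference in \eqref{eq:conditions} is $\Omega_P(n\rho_n)$ with the correct sign. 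Plugging into \eqref{eq:phixi} with $4\hat{t} = \Omega(1)$ yields $\|u_3^{(2)} - z^*\|_1 = n\exp(-\Omega_P(n\rho_n))$.

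The main obstacle is this third step: with $\hat{\lambda} \neq \lambda$, a residual $\one_n$-direction term appears in every signal coefficient, and hypothesis (ii) is precisely what forces $\hat{\lambda}$ strictly above $q$ by a margin $\Omega(\rho_n)$ so that $\frac{p-q}{2}$ dominates $\frac{p+q}{2} - \hat{\lambda}$ by $\Omega(\rho_n)$. Without this margin the sign of the differences in \eqref{eq:conditions} could flip, breaking the induction. Hypothesis (iii) only ensures the sigmoid prefactor $4\hat{t}$ does not vanish so convergence remains at the $\exp(-\Omega(n\rho_n))$ rate, while hypothesis (i) ensures the $\one_n$-direction coefficient keeps a consistent (positive) sign throughout all iterations.
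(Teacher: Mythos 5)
Your proposal is correct and follows essentially the same route as the paper: the paper's own proof simply observes that replacing $p,q$ by $\hat{p},\hat{q}$ substitutes $\hat{t},\hat{\lambda}$ for $t,\lambda$ in the updates and asserts that the analysis of Theorem~\ref{thm: convergence} goes through under conditions (i)--(iii), which play exactly the roles of \eqref{eq:lambdaub_sep}, \eqref{eq:lambdalb_sep}, and the constant-order prefactor. Your write-up actually carries out the verification that the paper leaves implicit (noise independence from $\hat{p},\hat{q}$, preservation of the $v_2$-coefficient, and the margin $\hat{\lambda}-q=\Omega(\rho_n)$ driving the sign conditions in \eqref{eq:conditions}), and does so correctly.
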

When $\hat{p}, \hat{q} \asymp \rho_n$, we need $\hat{p} - \hat{q} = \Omega(\rho_n)$ and $\hat{p}, \hat{q}$ not too far from the true values to achieve convergence. The proof is deferred to the Appendix. 

Finally, we consider updating the parameters jointly with $u$ (as explained in Remark \ref{rem:update_pq}) by first initializing the algorithm with some reasonable $p^{(0)}, q^{(0)}$.
\begin{theorem}
[Updating parameters and $u$ simultaneously]
\label{cor:update_pq}
Suppose we initialize with some estimates of true $(p,q)$ as $\hat{p}=p^{(0)}$, $\hat{q}=q^{(0)}$ satisfying the conditions in Proposition \ref{cor:pq_noise} and apply two meta iterations in Algorithm \ref{alg:M1} to update $u$ before updating $\hat{p}=p^{(1)}, \hat{q}=q^{(1)}$. After this, we alternate between updating $u$ and the parameters after each meta iteration. Then
\bas{
& p^{(1)} = p+O_P(\sqrt{\rho_n}/n), ~~~q^{(1)} = q+O_P(\sqrt{\rho_n}/n), \\
& \|u_3^{(2)}-z^*\|_1 = n\exp(-\Omega(n\rho_n)),
}
and the same holds for all the later iterations.
\end{theorem}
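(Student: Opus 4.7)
The plan has three ingredients: a variational-step guarantee from Proposition~\ref{cor:pq_noise}, a parameter-step guarantee based on Bernoulli concentration, and an induction closing the loop. Since $(p^{(0)}, q^{(0)})$ satisfy the robustness hypotheses, two meta iterations with these fixed parameters yield $\|u_3^{(2)}-z^*\|_1 = n\exp(-\Omega_P(n\rho_n))$ directly by Proposition~\ref{cor:pq_noise}. It then remains to show that plugging this $u$ into \eqref{eq:hatpq} gives $(p^{(1)}, q^{(1)})$ within $O_P(\sqrt{\rho_n}/n)$ of $(p,q)$, that the updated parameters still satisfy the hypotheses of Proposition~\ref{cor:pq_noise}, and that a single additional meta iteration (rather than a fresh two) suffices to maintain the $\ell_1$ bound on $u$.

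For the parameter error, I would decompose $p^{(1)} - p = [p(u_3^{(2)}) - p(z^*)] + [p(z^*) - p]$, where $p(v)$ denotes the right-hand side of \eqref{eq:hatpq} evaluated at membership vector $v$. The oracle term $p(z^*) - p$ reduces to the MLE under known labels: its numerator is a sum of $\Theta(n^2)$ independent $\text{Bernoulli}(p)$ indicators (within-community edges plus the within-pair diagonal $\text{diag}(A^{zy})$) and its denominator is a deterministic count of order $\Theta(n^2)$, so a Bernstein inequality yields $p(z^*) - p = O_P(\sqrt{\rho_n}/n)$. For the sensitivity term, the map $v \mapsto p(v)$ is bilinear with partials bounded by the row sums of $A$; combined with $\|u_3^{(2)}-z^*\|_1 \leq n\exp(-\Omega(n\rho_n))$, $u, z^* \in [0,1]^n$, and $\max_i \sum_j A_{ij} = O_P(n\rho_n)$, one gets $|p(u_3^{(2)}) - p(z^*)| = O_P(\rho_n \exp(-\Omega(n\rho_n)))$, which under $\sqrt{n}\rho_n = \Omega(\log n)$ is super-polynomially smaller than $\sqrt{\rho_n}/n$. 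An identical argument handles $q^{(1)}$.

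For the induction, once $(\hat p, \hat q) = (p,q) + O_P(\sqrt{\rho_n}/n)$, the hypothesis $\sqrt{n}\rho_n=\Omega(\log n)$ yields $\sqrt{\rho_n}/n = o(\rho_n)$, hence $\hat p - \hat q = \Omega(\rho_n)$. A Taylor expansion of $\hat\lambda, \hat t$ in the perturbed parameters shows that $(p+q)/2>\hat\lambda$, $\hat\lambda-q=\Omega(\rho_n)$, and $\hat t=\Omega(1)$ remain valid with probability tending to one. Proposition~\ref{cor:pq_noise} therefore applies again; since its conclusion is stated to hold at every iteration past the second, a single further meta iteration under $(\hat p, \hat q)$ keeps $\|u-z^*\|_1$ at $n\exp(-\Omega(n\rho_n))$, after which the sensitivity bound from the previous paragraph maintains $(p^{(k)}, q^{(k)}) = (p,q) + O_P(\sqrt{\rho_n}/n)$ for every subsequent $k$.

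The main obstacle is controlling the joint dependence between $u_3^{(2)}$ and $A$ when applying the Bernstein bound for the oracle estimator: the plug-in reduction removes the dependence carried by the \emph{membership vector}, but the edges appearing in $p(z^*)$ are the same ones that drove the variational updates. I would handle this by reserving, via one extra graph split (the same device used in Theorem~\ref{thm: convergence}), an independent sub-sample of edges exclusively for the parameter update, so the oracle sum becomes independent of $u_3^{(2)}$ and a standard Bernstein inequality applies. A secondary technicality is the diagonal contribution $2(\one_m-\psi^{10}-\psi^{01})^T\text{diag}(A^{zy})\one_m$, whose joint-probability weights are not directly controlled by $\|u-z^*\|_1$; but since this term has only $O(n)$ summands of size $O(\rho_n)$ while the main sums are $O(n^2\rho_n)$, it affects only lower-order terms and does not alter the $O_P(\sqrt{\rho_n}/n)$ rate.
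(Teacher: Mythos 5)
Your proposal is correct and follows essentially the same route as the paper: invoke the fixed-parameter convergence for the first two meta iterations, split $p^{(1)}-p$ into an oracle term at $z^*$ (controlled by concentration of the $\Theta(n^2)$ within-community edge indicators, giving $O_P(\sqrt{\rho_n}/n)$) plus a perturbation term controlled by $\|u_3^{(2)}-z^*\|_1=n\exp(-\Omega(n\rho_n))$, and then verify the perturbed parameters still satisfy the conditions of Proposition~\ref{cor:pq_noise} so the argument iterates. The only substantive (and harmless) deviation is your extra graph split for the parameter update, which is unnecessary: the oracle estimator $p(z^*)$ involves only $A$ and the deterministic true labels, so a marginal concentration bound suffices, and the dependence between $u_3^{(2)}$ and $A$ enters only through the cross terms, which you already bound deterministically via $\|u-z^*\|_1$ and the row sums of $A$.
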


\section{Experiments}
\label{sec:exp}
In this section, we present some numerical results. In Figures~\ref{fig:converge} to~\ref{fig:snr} we show the effectiveness of VIPS in our theoretical setting of two equal sized communities. In Figures~\ref{fig:snr2} (a) and (b) we show that empirically the advantage of VIPS holds even for unbalanced community sizes and $K>2$. Our goal is two-fold: (i) we demonstrate that the empirical convergence behavior of VIPS coincides well with our theoretical analysis in Section~\ref{sec:main}; (ii) in practice VIPS has superior performance over MFVI in both the simple setting we have analyzed and more general settings, thus confirming the advantage of the added dependence structure. For the sake of completeness, we also include comparisons with other popular algorithms, even though it is not our goal to show VIPS outperforms these methods.

In Figure \ref{fig:converge}, we compare the convergence property of VIPS with MFVI for initialization from independent Bernoulli's with means $\mu=0.1,0.5$, and $0.9$.  We randomly generate a graph with $n = 3000$ nodes with parameters $p_0= 0.2, q_0 = 0.01$ and show results from 20 random trials. %
We plot $\min(\|u-z^*\|_1, \|u-(\one-z^*)\|_1)$, or the $\ell_1$ distance of the estimated label $u$ to the ground truth $z^*$  on the $Y$ axis versus the iteration number on the $X$ axis. In this experiments, both VIPS and MFVI were run with the true $p_0,q_0$ values.
As shown in Figure \ref{fig:converge}, when $\mu = \half$, VIPS converges to $z^*$ after two meta iterations (6 iterations) for all the random initializations. In contrast, for MFVI, a fraction of the random initializations converge to $\mathbf{0}_n$ and $\one_n$. When $\mu \neq \half$, VIPS converges to the ground truth after three meta iterations, whereas MFVI stays at the stationary points $\mathbf{0}_n$ and $\one_n$. This is consistent with our theoretical results in Theorem~\ref{thm: convergence} and Corollary~\ref{cor:not-half}, and those in~\cite{mukherjee2018mean}.

\begin{figure}[ht]
\centering
\begin{tabular}{ccc}
\hspace{-0.5cm}
 \includegraphics[width=0.155\textwidth]{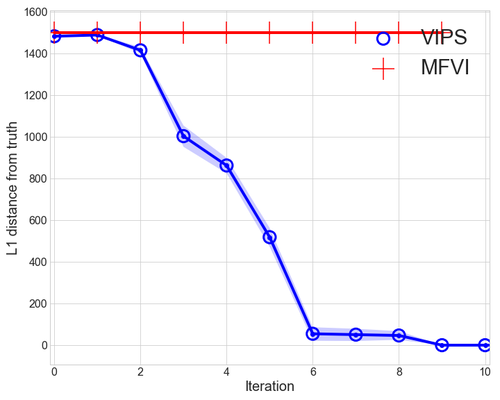}& 
 \hspace{-0.5cm}
 \includegraphics[width=0.155\textwidth]{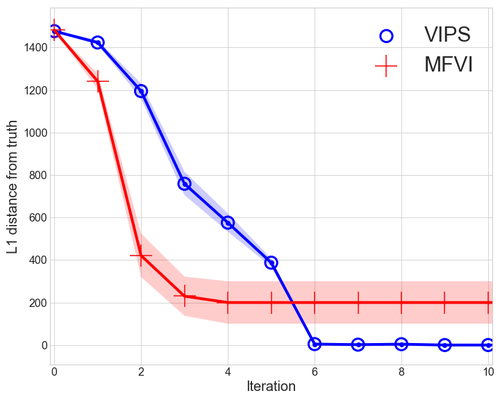}&
 \hspace{-0.5cm}
  \includegraphics[width=0.155\textwidth]{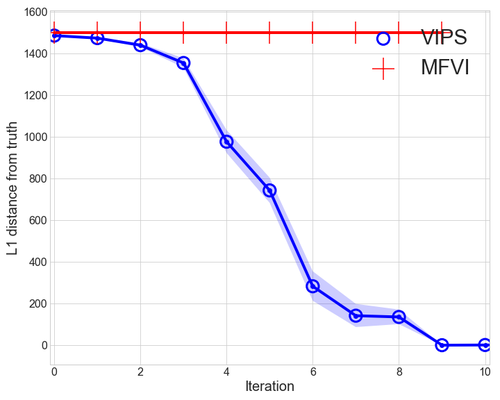}
 \\
\end{tabular}
 \caption{
$\ell_1$ distance from ground truth ($Y$ axis) vs. number of iterations ($X$ axis). The line is the mean of 20 random trials and the shaded area shows the standard deviation. %
$u$ is initialized from i.i.d. Bernoulli with mean $\mu = 0.1, 0.5, 0.9$ from the left to right. 
}
\label{fig:converge}
 \end{figure}

In Figure \ref{fig:heatmap}, we show when the true $p,q$ are unknown, the dependence structure makes the algorithm more robust to estimation errors in $\hat{p}, \hat{q}$.  The heatmap represents the normalized mutual information (NMI) \citep{romano2014standardized} between $u$ and $z^*$, with $\hat{p}$ on the $X$ axis and $\hat{q}$ on the $Y$ axis. We only examine pairs with $\hat{p}>\hat{q}$. Both VIPS and MFVI were run with $\hat{p}$ and $\hat{q}$, which were held fixed and differ from the true values to varying extent. The dashed line represents the true $p,q$ used to generate the graph. For each $\hat{p}, \hat{q}$ pair, the mean NMI for 20 random initializations from i.i.d Bernoulli($\half$) is shown.  VIPS recovers the ground truth in a wider range of $\hat{p}, \hat{q}$ values than MFVI. We show in Section~\ref{sec:expaddl} of the Appendix that similar results also hold for $K=2$ with unbalanced community sizes.

\vspace{-3pt}
\begin{figure}[ht]
\centering
\begin{tabular}{cc}
(a)~MFVI& (b)~VIPS \\
 \includegraphics[width=0.23\textwidth]{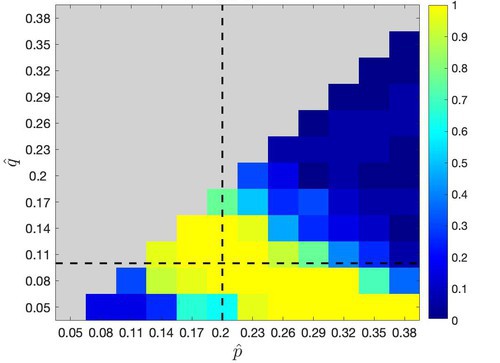}& 
\includegraphics[width=0.23\textwidth]{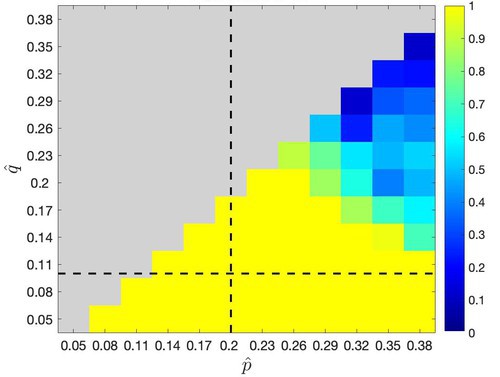} 
\end{tabular}
\caption{
NMI averaged over 20 random initializations for each $\hat{p}$, $\hat{q}$ ($\hat{p} > \hat{q}$). The true parameters are $(p_0, q_0) = (0.2, 0.1)$, $\pi = 0.5$ and $n=2000$. The dashed lines indicate the true parameter values.
}
 \label{fig:heatmap}
\end{figure}

In Figure \ref{fig:snr}, we compare VIPS with MFVI under different network sparsities and signal-to-noise ratios (SNR) as defined by $r_0=p_0/q_0$. For the sake of completeness, we also include two other popular algorithms, Belief Propagation (BP)~\cite{krzkalaBP2011} and Spectral Clustering~\cite{rohe2011spectral}. We plot the mean and standard deviation of NMI for 20 random trials in each setting. In each trial, to meet the conditions in Theorem~\ref{cor:update_pq}, we started VIPS with $\hat{p}$ equal to the average degree of $A$, and $\hat{q} = \hat{p}/r_0$. $\hat{p}$ and $\hat{q}$ were updated alternatingly with $u$ according to Eq.~\eqref{eq:hatpq} after three meta iterations in Algorithm~\ref{alg:M1}, a setting similar to that of Theorem~\ref{cor:update_pq}. 
 
 In Figure \ref{fig:snr}-(a), the average expected degree is fixed as the SNR $p_0/q_0$ increases on the $X$ axis, whereas in Figure~\ref{fig:snr}-(b), the SNR is fixed and we vary the average expected degree on the $X$ axis. %
 The results show that VIPS consistently outperforms MFVI, indicating the advantage of the added dependence structure. Note that we plot BP with the model parameters initialized at true $(p_0,q_0)$ , since it is sensitive to initialization setting, and behaves poorly with mis-specified ones. Despite this, VIPS is largely comparable to BP and Spectral Clustering. For average degree 20 (Figure~\ref{fig:snr}-(b)), BP outperforms all other methods, because of the correct parameter setting. This NMI value becomes 0.4 with high variance, if we provide initial $\hat{p},\hat{q}$ values to match the average degree but $\hat{p}/\hat{q}=10$. In contrast, VIPS is much more robust to the initial choice of $\hat{p},\hat{q}$, which we show in Section~\ref{sec:gen} of the Appendix. 

\vspace{-3pt}
\begin{figure}[ht]
    \centering
\begin{tabular}{cc}
(a)&(b) \\ 
 \includegraphics[width=0.23\textwidth]{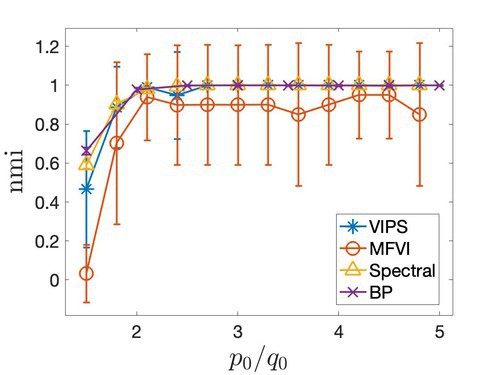}& 
\includegraphics[width=0.23\textwidth]{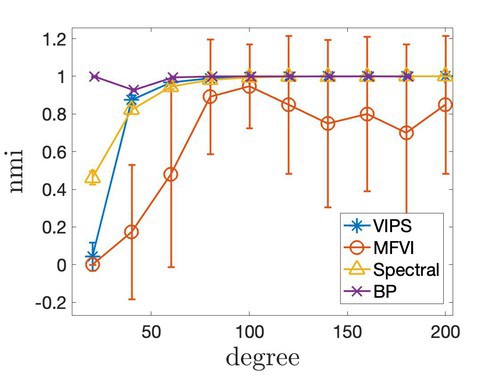}
\end{tabular}
  \caption { Comparison of NMI under different SNR $p_0/q_0$ and network degrees. The lines and error bars are means and standard deviations from 20 random trials. (a) Vary $p_0/q_0$ with degree fixed at 70.  (b) Vary the degree with $p_0/q_0 = 2$. In both figures $n=2000$. }
\label{fig:snr}
\end{figure}

Additional experiments (Appendix, section~\ref{sec:expaddl}) show that VIPS with fixed mis-specified parameters (within reasonable deviation from the truth), fixed true parameters and parameters updated with Eq.~\eqref{eq:hatpq} converge to the truth when initialized by independent Bernoulli's.

\section{Discussion and Generalizations }
\label{sec:discuss}
In this paper, we propose a simple Variational Inference algorithm with Pairwise Structure (VIPS) in a SBM with two equal sized communities. VI has been extensively applied in the latent variable models mainly due to their scalability and flexibility for incorporating changes in model structure. However, theoretical understanding of the convergence properties is limited and mostly restricted to the mean field setting with fully factorized variational distributions (MFVI). Theoretically we prove that in a SBM with two equal sized communities, VIPS can converge to the ground truth with probability tending to one for different random initialization schemes and a range of graph densities. In contrast, MFVI only converges for a constant fraction of Bernoulli(1/2) random initializations.  We consider settings where the model parameters are known, estimated or appropriately updated as part of the iterative algorithm. %

Though our main results are for $K=2,\pi=0.5$, we conclude with a discussion on generalizations to unbalanced clusters and SBMs with $K>2$ equal communities. 
To apply VIPS for general $K>2$ clusters, we will have $K^2-1$ categorical distribution parameters $\psi^{cd}$ for $c,d \in \{1,2,\ldots,K\}$ and marginal likelihood $\phi_1, \ldots, \phi_{K-1}$,  $\xi_1, \ldots, \xi_{K-1}$. The updates are similar to Eq.~\eqref{eq:thetaoo} and Eq.~\eqref{eq:phixi} and are deferred to the Appendix (section~\ref{sec:gen}). Similar to the $K=2$ case, we update the local and global parameters iteratively. As for the unbalanced case (see Appendix Section~\ref{sec:gen}), the updates involve an additional term which is the logit of $\pi$. We assume that $\pi$ is known and fixed. %

\begin{figure}[ht]
    \centering
\begin{tabular}{cc}
(a)&(b) \\ 
 \includegraphics[width=0.23\textwidth]{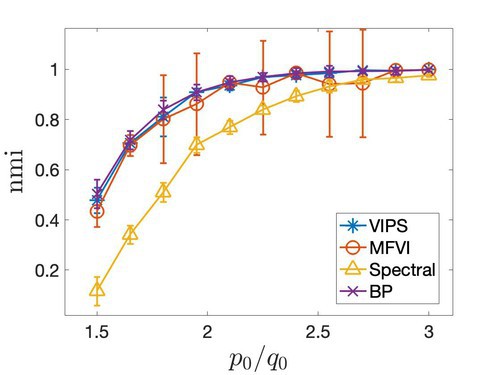}& 
\includegraphics[width=0.23\textwidth]{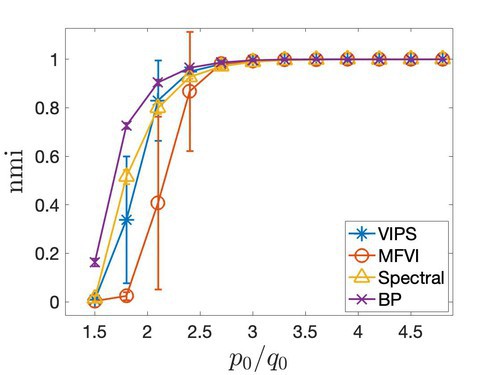} \\
\end{tabular}
  \caption { Comparison of VIPS, MFVI, Spectral and BP using error-bars from 20 random trials for $n=2000$, average degree 50, $p_0/q_0$ is changed on $X$ axis.  (a)~~ $\pi = 0.3$ (b) ~~ K =3, $B=(p-q)I+qJ$. For BP, MFVI and VIPS, we use true parameters. }
\label{fig:snr2}
\end{figure}
In Figure \ref{fig:snr2}-(a), we show results for unbalanced SBM with $\pi = 0.3$, which is assumed to be known. In Figure \ref{fig:snr2}-(b), similar to the setting in \citep{mukherjee2018mean}, we consider a SBM with three equal-sized communities. 
The parameters are set as $n = 2000$, average degree $50$, $p_0$ and $q_0$ are changed to get different SNR values and the random initialization is from $\text{Dirichlet}(1,1,1)$.  For a fair comparison of VIPS, MFVI and BP, we use the true $p_0,q_0$ values in all three algorithms;  robustness to parameter specification of VIPS is included in the Appendix~\ref{sec:gen}. We see that for the unbalanced setting (Figure~\ref{fig:snr2}-(a)) VIPS performs as well as BP and better than Spectral Clustering. For the $K=3$ setting (Figure~\ref{fig:snr2}-(b)) VIPS performs worse than BP and Spectral for very low SNR values, whereas for higher SNR it performs comparably to Spectral and BP, and better than MFVI, which has much higher variance. %

\newpage
\numberwithin{equation}{section}
\renewcommand\thefigure{A.\arabic{figure}} 
\renewcommand\thelemma{A.\arabic{lemma}}  
\allowdisplaybreaks
\onecolumn
\title{\vspace{-1cm}Supplementary material for ``A Theoretical Case Study of Structured Variational Inference for Community Detection''}
\date{}
\appendix
\maketitle
\allowdisplaybreaks
\vspace{-1cm}
\begin{quote}
This supplementary document contains detailed proofs and derivation of theoretical results presented in the main paper  ``A Theoretical Case Study of Structured Variational Inference for Community Detection'', and additional experimental results. In particular, Section~\ref{sec:elboderiv} contains the detailed derivation of updates of the Variational Inference with Pairwise Structure (VIPS) algorithm. Section~\ref{sec:proofmain} contains detailed proofs of the theoretical results presented in the main paper. Section~\ref{sec:gen} contains details on how to generalize VIPS to $K=2$ with unbalanced community sizes and $K=3$ with equal sized communities, and experimental results on robustness to parameter mis-specification. Section~\ref{sec:expaddl} contains additional experimental results and figures. 
\end{quote}

\section{Detailed Derivation of the Updates of VIPS}
\label{sec:elboderiv}
In the main paper \eqref{eq:elbo}, the Evidence Lower BOund (ELBO) for pairwise structured variational inference is 
\bas{ 
\cL(Q;\pi,B) =& \half \E_Q \sum_{i\neq j,a,b} Z_{ia}Z_{jb}(A_{ij}^{zz}\alpha_{ab}+f(\alpha_{ab}))+\half \E_Q \sum_{i\neq j,a,b} Y_{ia}Y_{jb}(A_{ij}^{yy}\alpha_{ab}+f(\alpha_{ab})) \\ \notag
+& \E_Q \sum_{i\neq j,a,b} Z_{ia}Y_{jb}(A_{ij}^{zy}\alpha_{ab}+f(\alpha_{ab}))+\E_Q \sum_{i,a,b} Z_{ia}Y_{ib}(A_{ii}^{zy}\alpha_{ab}+f(\alpha_{ab})) \\
-& \sum_{i=1}^m \kl(Q(z_i,y_i) || P(z_i)P(y_i))
}
where $\alpha_{ab} = \log(B_{ab}/(1-B_{ab}))$ and $f(\alpha)=-\log(1+e^{\alpha})$.  
Denote the first four terms in ELBO as $T_1$, $T_2$, $T_3$, $T_4$, where $T_1$, $T_2$ correspond to the likelihood of the blocks $A^{zz}$ and $A^{yy}$ in the adjacency matrix, $T_3$ corresponds to the likelihood of $(z_i,y_j), i \neq j$ and $T_4$ corresponds to $(z_i,y_i)$. Plugging in the marginal density of the independent nodes in $T_1$, $T_2$, $T_3$ and joint density of the dependent nodes in $T_4$, we have
\ba{
T_1 =& \half \sum_{i \neq j} \Big\{ [(1-\phi_i)(1-\phi_j)+\phi_i\phi_j](A_{ij}^{zz}\log\frac{p}{1-p} + \log(1-p))+\\  \notag
& [(1-\phi_i)\phi_j+\phi_i(1-\phi_j)](A_{ij}^{zz}\log\frac{q}{1-q} + \log(1-q))\Big\}
\\  
T_2 =& \half \sum_{i \neq j} \Big\{ [(1-\xi_i)(1-\xi_j)+\xi_i\xi_j](A_{ij}^{yy}\log\frac{p}{1-p} + \log(1-p))+\\ \notag
& [(1-\xi_i)\xi_j+\xi_i(1-\xi_j)](A_{ij}^{yy}\log\frac{q}{1-q} + \log(1-q))\Big\}
\\  
T_3 =& \sum_{i \neq j} \Big\{ [(1-\phi_i)(1-\xi_j)+\phi_i\xi_j](A_{ij}^{zy}\log\frac{p}{1-p} + \log(1-p))+\\ \notag
& [(1-\phi_i)\xi_j+\phi_i(1-\xi_j)](A_{ij}^{zy}\log\frac{q}{1-q} + \log(1-q))\Big\}
\\  
T_4 =& \sum_{i} \Big\{ (1-\psi_i^{01}-\psi_i^{10})(A_{ii}^{zy}\log\frac{p}{1-p} + \log(1-p))+\\  \notag
& (\psi_i^{01}+\psi_i^{10})(A_{ii}^{zy}\log\frac{q}{1-q} + \log(1-q))\Big\}
}
\\
The KL regularization term \eqref{eq:elbo-kl} is
\bas{ 
\kl(Q(z_i,y_i) || P(z_i)P(y_i)) =& \psi_i^{00}\log\frac{\psi_i^{00}}{(1-\pi)^2}+\psi_i^{01}\log\frac{\psi_i^{01}}{\pi(1-\pi)}+\\ 
&\psi_i^{10}\log\frac{\psi_i^{10}}{\pi(1-\pi)} + \psi_i^{11}\log\frac{\psi_i^{11}}{\pi^2} \\ 
=& \sum_{0\leq c,d \leq 1} \psi_i^{cd}\log\frac{\psi_i^{cd}}{\pi^c\pi^d(1-\pi)^{1-c}(1-\pi)^{1-d}}
}

To take the derivative of $\cL(Q;\pi,B)$ with respect to $\psi_i^{cd}, cd \neq 0$, we first have the derivative of the KL term
\ba{
\pde{\psi_i^{cd}} \kl(Q(z_i,y_i) || P(z_i)P(y_i)) =& \log \frac{\psi_i^{cd}}{\pi^{c+d}(1-\pi)^{2-c-d}} - \log \frac{\psi_i^{00}}{(1-\pi)^{2}} \\
=& \log \frac{\psi_i^{cd}}{1-\psi_i^{01}-\psi_i^{10}-\psi_i^{11}} ~~~~~~~(\pi = \half)
\label{eq:regu}
}
Denote the right hand side of Eq. \eqref{eq:regu} as  $\theta_i^{cd} \coloneqq \log \frac{\psi_i^{cd}}{1-\psi_i^{01}-\psi_i^{10}-\psi_i^{11}}$.  For the reconstruction terms, denoting $T(a,p) \coloneqq  a\log(\frac{p}{1-p})+\log(1-p)$ for simplicity, the derivative can be computed as
\ba{
\pde{\psi_i^{10}} (\sum T_k) =& \sum_{j,j\neq i} \Big[(2\phi_j - 1)T(A_{ij}^{zz},p)- (2\phi_j - 1)T(A_{ij}^{zz},q) \Big] + \\ \notag
& \sum_{j,j\neq i} \Big[(2\xi_j - 1)T(A_{ij}^{zy},p)- (2\xi_j - 1)T(A_{ij}^{zy},q) \Big] + \\ \notag
& \Big[-T(A_{ii}^{zy},p) + T(A_{ii}^{zy},q) \Big]
}

\ba{
\pde{\psi_i^{01}} (\sum T_k) =& \sum_{j,j\neq i} \Big[(2\xi_j - 1)T(A_{ij}^{yy},p)- (2\xi_j - 1)T(A_{ij}^{yy},q) \Big] + \\ \notag
& \sum_{j,j\neq i} \Big[(2\phi_j - 1)T(A_{ji}^{zy},p)- (2\phi_j - 1)T(A_{ji}^{zy},q) \Big] + \\ \notag
& \Big[-T(A_{ii}^{zy},p) + T(A_{ii}^{zy},q) \Big]
}

\ba{
\pde{\psi_i^{11}} (\sum T_k) =& \sum_{j,j\neq i} \Big[(2\phi_j - 1)T(A_{ij}^{zz},p)- (2\phi_j - 1)T(A_{ij}^{zz},q) \Big] + \\ \notag
&\sum_{j,j\neq i} \Big[(2\xi_j - 1)T(A_{ij}^{yy},p)- (2\xi_j - 1)T(A_{ij}^{yy},q) \Big] + \\ \notag
& \sum_{j,j\neq i} \Big[(2\xi_j - 1)T(A_{ij}^{zy},p)- (2\xi_j - 1)T(A_{ij}^{zy},q) \Big] + \\ \notag
& \sum_{j,j\neq i} \Big[(2\phi_j - 1)T(A_{ji}^{zy},p)- (2\phi_j - 1)T(A_{ji}^{zy},q) \Big] 
}

Setting the derivatives to $0$ we get the update for $\theta$ as \eqref{eq:thetazo}, \eqref{eq:thetaoz}, \eqref{eq:thetaoo}.

\section{Proofs of Main Results}
\label{sec:proofmain}
To prove Theroem~\ref{thm: convergence}, we first need a few lemmas. First we have the following lemma for the parameters $p$, $q$ and $\lambda$. 

\begin{lemma}
	If $p\asymp q \asymp \rho_n, \rho_n \to 0$ and $p - q = \Omega(\rho_n)$, then
	\begin{align}
	\lambda-q&=\Omega(\rho_n) > 0, \label{eq:lambdalb_sep} \\
	\frac{p+q}{2}-\lambda&=\Omega(\rho_n)>0. \label{eq:lambdaub_sep}
	\end{align}
	\label{prop:pq}
\end{lemma}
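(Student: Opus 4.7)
The plan is to reduce both inequalities to elementary convexity facts about the ratio $r := p/q$. Under the assumptions $p\asymp q\asymp \rho_n$ and $p-q=\Omega(\rho_n)$ with $q=\Theta(\rho_n)$, there exist constants $c>0$ and $C<\infty$ such that $r\in[1+c,\,C]$ for all large $n$; this compact, bounded-away-from-$1$ range of $r$ is what ultimately produces the $\Omega(\rho_n)$ gap.

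First I would Taylor-expand the numerator of $\lambda$. Because $p,q\to 0$, $\log\!\bigl((1-q)/(1-p)\bigr)=(p-q)+O(\rho_n^2)=(p-q)\bigl(1+O(\rho_n)\bigr)$, so
\[
2t=\log r+\log\tfrac{1-q}{1-p}=\log r+O(\rho_n),
\]
and since $\log r=\Theta(1)$ on the range of $r$, this yields
\[
\lambda=\frac{\log\bigl((1-q)/(1-p)\bigr)}{2t}=\frac{q(r-1)}{\log r}\bigl(1+O(\rho_n)\bigr).
\]

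For \eqref{eq:lambdalb_sep}, I would invoke the elementary inequality $(r-1)/\log r>1$ for all $r>1$. Since the map $r\mapsto (r-1)/\log r$ is continuous and strictly greater than $1$ on $[1+c,C]$, it is bounded below there by some constant $1+c'>1$. Consequently $\lambda\ge q(1+c')\bigl(1+O(\rho_n)\bigr)\ge q+\tfrac{c'}{2}q$ for all $n$ large, giving $\lambda-q=\Omega(\rho_n)>0$.

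For \eqref{eq:lambdaub_sep}, I would expand $\frac{p+q}{2}-\lambda=q\!\left[\frac{r+1}{2}-\frac{r-1}{\log r}\right]+O(\rho_n^2)$. The bracket has the same sign as
\[
h(r):=(r+1)\log r-2(r-1),
\]
and I would check by routine differentiation that $h(1)=0$, $h'(r)=\log r-1+1/r$ satisfies $h'(1)=0$, and $h''(r)=(r-1)/r^2>0$ for $r>1$. Thus $h'>0$ and so $h>0$ on $(1,\infty)$, and on $[1+c,C]$ the bracket is bounded below by an absolute positive constant. This gives $\frac{p+q}{2}-\lambda=\Omega(\rho_n)>0$, as desired.

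The only ``obstacle'' is bookkeeping: one must check that the $O(\rho_n)$ multiplicative remainder in the Taylor expansion of $\lambda$, and the $O(\rho_n^2)$ additive remainder in the second identity, cannot destroy the $\Omega(\rho_n)$ signal. This is automatic: the separating quantities $(r-1)/\log r-1$ and $(r+1)/2-(r-1)/\log r$ are $\Theta(1)$ on the compact range of $r$, while $q=\Theta(\rho_n)\gg q\cdot O(\rho_n)$, so the first-order terms dominate.
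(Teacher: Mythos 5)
Your proof is correct. The two claims reduce, as you say, to the strict inequalities $(r-1)/\log r>1$ and $(r+1)\log r>2(r-1)$ for $r>1$, both of which you verify cleanly (the second via $h(1)=h'(1)=0$ and $h''(r)=(r-1)/r^2>0$), and the hypothesis $p-q=\Omega(\rho_n)$ with $q=\Theta(\rho_n)$ is exactly what keeps $r=p/q$ in a compact set bounded away from $1$, so that these strict inequalities upgrade to uniform $\Theta(1)$ gaps. Your error bookkeeping is also sound: the remainders are $O(q\rho_n)=o(\rho_n)$ and cannot swamp the $\Theta(\rho_n)$ main terms.

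The paper itself does not give an argument at all: its ``proof'' of this lemma is a one-line citation to Proposition~2 of an external reference (\cite{sarkar2019random}). So your route is genuinely different in presentation, if presumably not in substance --- you supply a self-contained elementary derivation (Taylor expansion of $\log\frac{1-q}{1-p}$, reduction to the ratio $r$, and two calculus facts) where the paper outsources the work. What your approach buys is that the lemma becomes verifiable within the paper without chasing the reference, and it makes transparent exactly where each hypothesis is used: $p\asymp q$ gives the upper bound on $r$, $p-q=\Omega(\rho_n)$ gives the lower bound $r\ge 1+c$, and $\rho_n\to 0$ controls the remainders. The only cost is length.
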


\begin{proof}
	The proof follows from Proposition 2 in \cite{sarkar2019random}.
\end{proof}

In the proof, we utilize the spectral property of the population matrix $P$ and generalize it to the finite sample case by bounding the term related to the residual $R = A - P$. We use Berry-Esseen Theorem to bound the residual terms conditioning on $u$. 

\begin{lemma}[Berry-Esseen bound] 
	Define 
	\ba{
	r_i = \sum_{j=1}^n (A_{ij} - P_{ij})(u(j) - \half),
	}
	where $u$ and $A$ are independent. 
	\label{lem:be_bound}
	\begin{align*}
	\sup_{x\in\mathbb{R}}|P\left(r_i / \sigma_u \leq x \mid u \right) - \Phi(x)| \leq \frac{C\rho_u}{\sigma^3_u},
	\end{align*}
	where $C$ is a general constant, $\Phi(\cdot)$ is the CDF of standard Gaussian, $\rho_u$ and $\sigma_u$ depend on $u$. 
	\label{lem:berry-e}
\end{lemma}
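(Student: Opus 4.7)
The plan is to recognize this as a direct application of the classical Berry–Esseen theorem for sums of independent, but not identically distributed, mean-zero random variables, applied conditionally on $u$. Since $u$ is assumed independent of $A$, conditioning on $u$ freezes the weights $c_j \coloneqq u(j) - \tfrac{1}{2}$ as constants, and the $n$ summands
\begin{align*}
X_{ij} \coloneqq (A_{ij} - P_{ij}) c_j, \qquad j=1,\dots,n,
\end{align*}
are then independent across $j$ (by independence of the Bernoulli entries $A_{ij}$ in a row of the SBM adjacency matrix, ignoring the diagonal), each with mean zero.

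First I would record the moment calculations for each summand conditional on $u$: $\mathrm{Var}(X_{ij}\mid u) = P_{ij}(1-P_{ij}) c_j^2$ and $\mathbb{E}[|X_{ij}|^3 \mid u] = \mathbb{E}[|A_{ij}-P_{ij}|^3]\, |c_j|^3$, the latter being bounded by $P_{ij}(1-P_{ij})$ times a constant for Bernoulli variables. Summing over $j$ identifies the quantities used in the lemma,
\begin{align*}
\sigma_u^2 = \sum_{j=1}^n P_{ij}(1-P_{ij}) c_j^2, \qquad \rho_u = \sum_{j=1}^n \mathbb{E}[|A_{ij}-P_{ij}|^3]\,|c_j|^3,
\end{align*}
both of which are deterministic once we condition on $u$.

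Next I would invoke the standard Berry–Esseen theorem for independent, non-identically distributed summands (as in Theorem 2 of Chapter XVI of Feller, Vol.~II): if $S = \sum_j X_{ij}$ with $\mathbb{E}[X_{ij}]=0$, $\mathrm{Var}(S)=\sigma_u^2$ and $\sum_j \mathbb{E}|X_{ij}|^3 = \rho_u$, then
\begin{align*}
\sup_{x \in \mathbb{R}} \bigl| P(S/\sigma_u \leq x) - \Phi(x) \bigr| \leq \frac{C \rho_u}{\sigma_u^3}
\end{align*}
for an absolute constant $C$. Applying this with $S=r_i$ conditional on $u$ gives the stated bound.

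There is really no substantive obstacle; the only point that requires care is the independence assumption in the hypothesis (that $u$ is independent of $A$), which is what justifies the conditioning step. This is precisely why, in the application of this lemma inside the proof of Theorem~\ref{thm: convergence}, the authors rely on the graph-splitting device: using a fresh independent subgraph $A^{(i)}$ at each iteration ensures that the current iterate $u$ is independent of the adjacency entries being averaged, so that the Berry–Esseen bound can be invoked as stated.
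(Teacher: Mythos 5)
Your proof is correct and follows essentially the same route as the paper: condition on $u$, compute the conditional variance $\sigma_u^2$ and third absolute moment $\rho_u$ of the independent summands (your formulas specialize exactly to the paper's expressions once $P_{ij}$ is replaced by $p$ or $q$ according to community membership), and invoke the classical Berry--Esseen theorem for independent, non-identically distributed variables. Your closing remark about why the independence of $u$ and $A$ is secured by graph splitting matches the paper's usage as well.
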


\begin{proof}
	Since $r_i$ is the sum of independent, mean zero random variables, the sum of the conditional variances is
	\bas{
		\sigma_u^2 = \text{Var}(r_i | u) = &p(1-p)\sum_{i \in G_1}(u(i) - \half)^2 + q(1-q) \sum_{i \in G_2}(u(i) - \half)^2, 
	}
	and the sum of the conditional absolute third central moments is
	\bas{
		\rho_u = &p(1-p)(1-2p+2p^2)\sum_{i \in G_1}|u(i) - \half |^3 + q(1-q)(1-2q+2q^2) \sum_{i \in G_2}|u(i)  - \half |^3.
	}
	
	The desired bound follows from the Berry-Esseen Theorem.
\end{proof} 
The next lemma shows despite the fact that $A$ introduces some dependency among $r_i$ due to its symmetry, we can still treat $r_i$ as almost iid. 
\begin{lemma}[McDiarmid's Inequality]
	\label{lem:mcd_bound}
	Let $r_i$ be the noise defined in Lemma~\ref{lem:berry-e} and let $h(r_i)$ be a bounded function with $\|h \|_{\infty} \leq M$. Then 
	$$P\left( \left| \frac{2}{n}\sum_{i\in \mathcal{A}} h(r_i) - \E(h(r_i) | u) \right| > w \mid u \right) \leq \exp\left(-\frac{c_0 w^2}{nM} \right)$$
	for some general constant $c_0$, provided $|\mathcal{A}|=\Theta_P(n)$. 
\end{lemma}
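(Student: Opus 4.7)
The plan is to apply McDiarmid's bounded-difference inequality conditionally on $u$, treating the independent randomness as the upper-triangular entries of $A$ rather than the $r_i$'s themselves (which are coupled through $A$'s symmetry). First I would condition on $u$ together with the true community assignments $U$ (the latter enters only through the means $P_{ij}$, so this changes nothing probabilistically beyond what is already conditioned on). Once we condition, the entries $\{A_{ij}\}_{i<j}$ are mutually independent Bernoullis, and $S := \sum_{i \in \mathcal{A}} h(r_i)$ is a deterministic function of these $\binom{n}{2}$ coordinates, which is exactly the setting where McDiarmid applies.

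The key computation is the bounded-difference constant $c_{ij}$ for each coordinate $A_{ij}$. Because $A$ is symmetric, flipping a single $A_{ij}$ simultaneously shifts \emph{two} noise variables: $r_i$ changes by $(\Delta A_{ij})(u(j)-\tfrac12)$ and $r_j$ changes by $(\Delta A_{ij})(u(i)-\tfrac12)$, each of magnitude at most $\tfrac12$. Since $\|h\|_\infty \le M$, the terms $h(r_i)$ and $h(r_j)$ can each shift by at most $2M$, so $c_{ij} \le 4M$ when both $i,j \in \mathcal{A}$ (and $c_{ij}\le 2M$ otherwise). Since $|\mathcal{A}|=\Theta_P(n)$, the number of nonzero $c_{ij}$'s is $O(n^2)$ and $\sum_{i<j} c_{ij}^2 = O(n^2 M^2)$. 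I would also note, as a separate tiny step, that $\E(S\mid u) = |\mathcal{A}|\,\E(h(r_i)\mid u)$ under the natural assumption that the $r_i$'s within $\mathcal{A}$ share a common conditional distribution given $u$ (which holds if $\mathcal{A}$ is contained in one community; otherwise the proof still goes through with a per-community grouping and triangle inequality).

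McDiarmid's inequality then produces a bound of the form
\begin{align*}
P\bigl(|S - \E(S\mid u)| > t \,\bigm|\, u\bigr) \;\le\; 2\exp\!\left(-\frac{c\,t^2}{n^2 M^2}\right),
\end{align*}
and substituting $t = nw/2$ together with $|\mathcal{A}| = \Theta_P(n)$ gives an exponential tail for $\bigl|\tfrac{2}{n}\sum_{i\in\mathcal{A}} h(r_i) - \E(h(r_i)\mid u)\bigr|$ of the claimed form, absorbing constants into $c_0$.

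The main obstacle is the dependence among the $r_i$'s induced by the symmetry $A_{ij}=A_{ji}$: a direct Hoeffding bound is unavailable because $r_i$ and $r_j$ share the coordinate $A_{ij}$. The whole point of routing through McDiarmid on the edge coordinates is that this dependence is benign once the correct bookkeeping is done, namely the observation that a single edge flip perturbs at most \emph{two} summands of $S$. A secondary subtlety is that a naive bound could pessimistically count each coordinate as affecting all $n$ summands via some chain of dependencies; showing that only two summands move per edge flip is what keeps $\sum c_{ij}^2$ at order $n^2 M^2$ rather than blowing up, and is the step that makes the concentration nontrivial.
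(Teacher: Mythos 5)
Your proof is correct and follows essentially the same route as the paper: the paper's entire proof is a citation to Lemma 20 of \cite{sarkar2019random}, which is precisely the McDiarmid bounded-difference argument on the independent upper-triangular entries of $A$ that you carry out explicitly, with the key observation that one edge flip perturbs only the two summands $h(r_i)$ and $h(r_j)$. Note only that your derived exponent is of order $w^2/M^2$ rather than $w^2/(nM)$, which is stronger than the stated bound in the regime $M=O(n)$ relevant here, so the claimed inequality follows a fortiori.
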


\begin{proof}
	The proof follows from Lemma 20 in \cite{sarkar2019random}.
\end{proof}

\begin{lemma}
Let $r_i$ be defined as in Lemma~\ref{lem:berry-e} and assume $A$ and $u$ are independent, we have  $\sup_{i\in\mathcal{A}} |r_i| = O_P(\sqrt{n\rho_n \log n})$ if the index set $|\mathcal{A}|=\Theta_P(n)$. 
\label{lem:uniform_r}
\end{lemma}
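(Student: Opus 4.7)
The plan is to apply Bernstein's inequality conditional on $u$ and then take a union bound over $i \in \mathcal{A}$. Because $A$ and $u$ are independent by assumption, once we condition on $u$ the summands $X_j \coloneqq (A_{ij} - P_{ij})(u(j) - 1/2)$ (for $j \neq i$) become independent mean-zero random variables, since the off-diagonal entries in a single row of $A$ are mutually independent Bernoulli draws. Each $X_j$ is bounded: $|X_j| \leq 1/2$. The conditional variance sum is controlled by
\[
\sum_{j \neq i} \mathrm{Var}(X_j \mid u) = \sum_{j\neq i} P_{ij}(1-P_{ij})(u(j) - \tfrac{1}{2})^2 \leq \max(p,q)\cdot \tfrac{n}{4} = O(n\rho_n),
\]
using $P_{ij} \leq \max(p,q) = O(\rho_n)$ and $u(j) \in [0,1]$.

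Next I would apply Bernstein's inequality with $t = C\sqrt{n\rho_n \log n}$ for a constant $C$ to be chosen. Bernstein gives
\[
P(|r_i| > t \mid u) \leq 2\exp\!\left(-\frac{t^2}{2(\sigma_u^2 + Mt/3)}\right),
\]
with $M = 1/2$ and $\sigma_u^2 = O(n\rho_n)$. The hypothesis $\sqrt{n}\rho_n = \Omega(\log n)$ implies $n\rho_n = \Omega(\sqrt{n}\log n) \gg \sqrt{n\rho_n \log n}$ for large $n$, so the sub-Gaussian (variance) term dominates the sub-exponential (boundedness) term and the exponent simplifies to $-\Omega(C^2 \log n)$. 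Choosing $C$ large enough yields $P(|r_i| > t \mid u) \leq 2 n^{-3}$, say.

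Because this conditional bound is uniform in $u$, it also holds unconditionally. Taking a union bound over $i \in \mathcal{A}$, and noting that $|\mathcal{A}| = \Theta_P(n)$ means $|\mathcal{A}| \leq C' n$ with probability tending to one for some constant $C'$, we obtain
\[
P\!\left(\sup_{i \in \mathcal{A}} |r_i| > C\sqrt{n\rho_n \log n}\right) \leq 2 C' n \cdot n^{-3} + o(1) = o(1),
\]
which gives $\sup_{i \in \mathcal{A}} |r_i| = O_P(\sqrt{n\rho_n \log n})$ as claimed.

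This is essentially a routine concentration argument and I do not anticipate a serious obstacle; the one subtlety worth flagging is that although $r_i$ and $r_{i'}$ for $i \neq i'$ are not independent across rows (because $A$ is symmetric, so $A_{ij}$ appears in both $r_i$ and $r_j$), this dependence is irrelevant for the supremum bound, which only uses the marginal tail of each $r_i$ via a union bound. The conditioning on $u$ is what enables the per-row independence needed for Bernstein, and the independence assumption between $A$ and $u$ is used in exactly this step.
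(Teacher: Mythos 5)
Your proof is correct and follows essentially the same route as the paper's: a per-row concentration bound conditional on $u$ (exploiting the independence of $A$ and $u$ and the boundedness of the summands), followed by a union bound over $i\in\mathcal{A}$, with the symmetry-induced dependence between rows correctly noted as irrelevant. The only difference is cosmetic: the paper labels its tail bound ``Hoeffding'' while writing the variance-based exponent $\exp(-t^2/(2n\rho_n))$, which is really the Bernstein form you invoke explicitly, and your verification that the variance term dominates the boundedness term under $\sqrt{n}\rho_n=\Omega(\log n)$ makes the argument slightly more careful than the paper's.
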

\begin{proof}
Since $r_i$ is the sum of independent bounded random variables, for all i, $r_i = O_P(\sqrt{n\rho_n})$. By Hoeffding inequility, we know for all $t>0$
\bas{
P(|r_i| > t) \leq \exp(-\frac{t^2}{2n\rho_n})
}
and by the union bound 
\bas{
P(\sup_i |r_i| > t) \leq \exp(C\log n -\frac{t^2}{2n\rho_n})
}
For $\forall \epsilon>0$, let $t = C_\epsilon \sqrt{n\rho_n \log n}$ with $n^{\frac{C_\epsilon^2}{2}-1} > 1/\epsilon$, then by definition $\sup_i |r_i|  = O_P(\sqrt{n\rho_n \log n})$
\end{proof}

Next we have a lemma ensuring the signal in the first iteration is not too small.
\begin{lemma}[Littlewood-Offord]
	\label{lem:anti_conc}
	Let $s_1 = (p-\lambda)\sum_{i\in G_1} (u^{(0)}(i)-1/2)+(q-\lambda)\sum_{i\in G_2} (u^{(0)}(i)-1/2)$, $s_2= (q-\lambda)\sum_{i\in G_1} (u^{(0)}(i)-1/2)+(p-\lambda)\sum_{i\in G_2} (u^{(0)}(i)-1/2)$.
	Then
	$$P\left( |s_1| \leq c \right) \leq B\cdot \frac{c}{\rho_n\sqrt{n}}$$
	for $c>0$ and $B$ as constant. The same bound holds for $ |s_2|, |s_1-s_2|$.
\end{lemma}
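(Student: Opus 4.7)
The plan is to recognize $s_1$ as a signed sum of i.i.d.\ Rademacher variables with coefficients whose magnitudes are bounded below by $\Omega(\rho_n)$, and then invoke the classical Erd\H{o}s--Littlewood--Offord anti-concentration inequality. Since $u^{(0)}(i)\sim\text{Bernoulli}(1/2)$, we may write $u^{(0)}(i)-\tfrac12=\tfrac12\epsilon_i$ with $\epsilon_i\in\{-1,+1\}$ i.i.d.\ Rademacher. Then
\[
s_1 \;=\; \tfrac{1}{2}\sum_{i=1}^{n} a_i\,\epsilon_i, \qquad
a_i \;=\; \begin{cases} p-\lambda, & i\in G_1,\\ q-\lambda, & i\in G_2.\end{cases}
\]
By Lemma~\ref{prop:pq}, both $p-\lambda$ and $\lambda-q$ are $\Omega(\rho_n)$, so there is a constant $c_0>0$ such that $|a_i|\ge c_0\rho_n$ for every $i$.

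Next, I invoke the Erd\H{o}s--Littlewood--Offord inequality: for any reals $\{a_i\}$ with $|a_i|\ge\delta$ and any $x\in\mathbb{R}$,
\[
P\!\left(\sum_{i=1}^{n} a_i\epsilon_i\in[x,\,x+2\delta)\right)\;\le\;\binom{n}{\lfloor n/2\rfloor}2^{-n+1}\;\le\;\frac{C}{\sqrt{n}}.
\]
Covering $[-2c,2c]$ by at most $\lceil 2c/\delta\rceil+1$ sub-intervals of length $2\delta$ and applying the bound to each,
\[
P(|s_1|\le c)\;=\;P\!\left(\textstyle\sum a_i\epsilon_i\in[-2c,2c]\right)\;\le\;\frac{C(2c/\delta+1)}{\sqrt n}\;\le\;\frac{B\,c}{\rho_n\sqrt{n}},
\]
where the last step uses $\delta=c_0\rho_n$ and absorbs constants (valid in the non-trivial regime $c\gtrsim\rho_n$, which is the only regime in which the claim is informative).

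The bound for $|s_2|$ is identical after swapping the roles of $G_1$ and $G_2$. For $s_1-s_2$ the coefficients collapse further: a direct computation gives
\[
s_1-s_2 \;=\; \tfrac{p-q}{2}\!\left(\sum_{i\in G_1}\epsilon_i-\sum_{i\in G_2}\epsilon_i\right)\;=\;\tfrac{p-q}{2}\sum_{i=1}^{n}\tilde\epsilon_i,
\]
where $\tilde\epsilon_i=\epsilon_i$ on $G_1$ and $\tilde\epsilon_i=-\epsilon_i$ on $G_2$ are again i.i.d.\ Rademacher. Since $p-q=\Omega(\rho_n)$ by assumption, every coefficient again has magnitude $\Omega(\rho_n)$, and the same covering argument produces the desired bound.

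The only subtle point is the need to verify that each coefficient is bounded away from zero on the scale of $\rho_n$, which is exactly what Lemma~\ref{prop:pq} provides through the two-sided separation of $\lambda$ from $q$ and from $(p+q)/2$. This is the qualitative content of the assumption $p>q$ with $p-q=\Omega(\rho_n)$; once this is in hand, the rest is a routine interval-covering reduction to the classical Littlewood--Offord estimate, and I do not foresee any further technical obstacle.
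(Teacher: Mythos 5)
Your proposal is correct and follows essentially the same route as the paper, which simply observes that $2u^{(0)}(i)-1$ are Rademacher, that Lemma~\ref{prop:pq} makes every coefficient $\Omega(\rho_n)$ in magnitude, and then cites the Erd\H{o}s--Littlewood--Offord bound; you have merely filled in the standard interval-covering step and the coefficient computation for $s_1-s_2$ explicitly. Your caveat that the stated inequality is only meaningful (and only follows from the covering argument) when $c\gtrsim\rho_n$ is well taken, but harmless, since the lemma is invoked in the paper only at scale $c\asymp\sqrt{n}\rho_n$.
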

\begin{proof}
	Noting that $2u^{(0)}(i)-1\in \{-1,1\}$ each with probability $1/2$, and Lemma~\ref{prop:pq}, this is a direct consequence of the Littlewood-Offord bound in~\cite{erdos1945lemma}.
\end{proof}

Finally, we have the following upper and lower bound for some general update $\phi_i$.
\begin{lemma}
	Assume $\phi_i$ has the update form $\phi_i= (a+e^{4t(s+r_i)}) / (b+e^{4t(s+r_i)})$ for $i \in [m]$, $b > a > 0$ and $b-a$, $(b-a)/b$ are of constant order. $r_i$ is defined as in Lemma \ref{lem:berry-e}. Let set $\mathcal{A} \subset [m]$, with $\Delta > 0$, we have
	\bas{
		\sum_{i \in \mathcal{A}} \phi_i \geq & |\mathcal{A}|  - \frac{b-a}{b}  |\mathcal{A}|  \Phi(\frac{-s + \Delta}{\sigma_u})- C' |\mathcal{A}|  \frac{\rho_u}{\sigma_u^3} - C'' |\mathcal{A}| e^{-4t \Delta} - O_P(\sqrt{ |\mathcal{A}| }), \\
		\sum_{i \in \mathcal{A}} \phi_i \leq &  |\mathcal{A}|  - \frac{b-a}{b}  |\mathcal{A}|  \Phi(\frac{-s - \Delta}{\sigma_u}) + C' |\mathcal{A}|  \frac{\rho_u}{\sigma_u^3}+  |\mathcal{A}| e^{-4t \Delta} + O_P(\sqrt{ |\mathcal{A}| }).
	}
	\label{lemma:bounds}
\end{lemma}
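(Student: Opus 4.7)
My plan is to reduce the problem to sandwiching a single sum and then apply the Berry--Esseen and McDiarmid lemmas already established. I would first rewrite
\begin{equation*}
\phi_i = 1 - \frac{b-a}{b + e^{4t(s+r_i)}},
\end{equation*}
so that with $S := \sum_{i\in\mathcal{A}} 1/(b + e^{4t(s+r_i)})$ we have $\sum_{i\in\mathcal{A}}\phi_i = |\mathcal{A}| - (b-a) S$; the two claimed inequalities then reduce to upper and lower bounds on $S$.

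For the upper bound on $S$ (which yields the lower bound on $\sum\phi_i$) I would partition the index set at the threshold $r_i = -s + \Delta$. On $\{r_i > -s + \Delta\}$ the denominator exceeds $e^{4t\Delta}$, so each summand is at most $e^{-4t\Delta}$; on the complement each summand is trivially at most $1/b$. This gives
\begin{equation*}
S \leq \frac{1}{b}\sum_{i\in\mathcal{A}} \mathbf{1}[r_i \leq -s + \Delta] + |\mathcal{A}|\, e^{-4t\Delta}.
\end{equation*}
Conditional on $u$, Lemma A.2 applied with $h(x) = \mathbf{1}[x \leq -s+\Delta]$ and $M = 1$ rewrites the indicator sum as $|\mathcal{A}|\, P(r_i \leq -s + \Delta \mid u) + O_P(\sqrt{|\mathcal{A}|})$, and Lemma A.1 bounds this one-dimensional probability by $\Phi((-s+\Delta)/\sigma_u) + C\rho_u/\sigma_u^3$. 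Multiplying through by $b - a$, and using $(b-a)/b = \Theta(1)$ to absorb multiplicative prefactors into the constants $C'$ and $C''$, delivers the stated lower bound on $\sum\phi_i$.

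For the lower bound on $S$ (hence the upper bound on $\sum\phi_i$) I would restrict the sum to the event $\{r_i \leq -s - \Delta\}$; there $e^{4t(s+r_i)} \leq e^{-4t\Delta}$, and using $(1+x)^{-1} \geq 1 - x$ for $x \geq 0$,
\begin{equation*}
\frac{1}{b + e^{4t(s+r_i)}} \geq \frac{1}{b + e^{-4t\Delta}} \geq \frac{1}{b} - \frac{e^{-4t\Delta}}{b^2}.
\end{equation*}
The same McDiarmid-plus-Berry--Esseen combination, now applied to $h(x) = \mathbf{1}[x \leq -s - \Delta]$, lower bounds $|\{i \in \mathcal{A}: r_i \leq -s - \Delta\}|$ by $|\mathcal{A}|\Phi((-s-\Delta)/\sigma_u) - C|\mathcal{A}|\rho_u/\sigma_u^3 - O_P(\sqrt{|\mathcal{A}|})$. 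Combining these two ingredients, and using $\Phi \leq 1$ to fold the $e^{-4t\Delta}/b^2$ correction into a single prefactor of $|\mathcal{A}|\, e^{-4t\Delta}$, gives the claimed upper bound on $\sum\phi_i$ after multiplying by $b-a$.

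The main obstacle is really bookkeeping rather than any individual hard estimate: the three error contributions---the Berry--Esseen remainder $C\rho_u/\sigma_u^3$, the McDiarmid fluctuation $O_P(\sqrt{|\mathcal{A}|})$, and the truncation error controlled by $e^{-4t\Delta}$---must each stay explicit and additive in the final bounds, because in the downstream applications of this lemma inside the proof of Theorem~\ref{thm: convergence} they are compared to the evolving signal $s$ at different rates (in particular, $\Delta$ will be chosen iteration by iteration). The hypotheses $b-a = \Theta(1)$ and $(b-a)/b = \Theta(1)$ (which force $b = \Theta(1)$ as well), together with the independence between $A$ and $u$ required to condition inside Lemmas A.1 and A.2, are what guarantee that all multiplicative constants remain bounded and absorbable into $C'$, $C''$.
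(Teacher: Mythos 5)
Your proposal is correct and follows essentially the same route as the paper's proof: truncate at the thresholds $r_i = -s \pm \Delta$, bound each summand on the two resulting pieces (with the truncation error controlled by $e^{-4t\Delta}$), and apply Lemma~\ref{lem:berry-e} together with Lemma~\ref{lem:mcd_bound} to convert the indicator counts into $\Phi$-terms plus the $\rho_u/\sigma_u^3$ and $O_P(\sqrt{|\mathcal{A}|})$ remainders. The only difference is cosmetic: you first rewrite $\phi_i = 1 - (b-a)/(b+e^{4t(s+r_i)})$ and sandwich the sum of reciprocals, whereas the paper bounds $\phi_i$ directly on each piece; the resulting constants $C'$, $C''$ differ only by factors of $(b-a)$ and $b$, which the hypotheses make $\Theta(1)$.
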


\begin{proof}
	Define the set $J^+ =  \{ i : r_i > - s + \Delta\}$, $\Delta \geq 0$. For $i \in \mathcal{A} \cap J^+$
	\bas{
		\phi_i= \frac {a+e^{4t(s+r_i)}} {b+e^{4t(s+r_i)}} \geq  \frac {a+e^{4t\Delta}} {b+e^{4t\Delta}} \geq 1 - (b-a) e^{-4t\Delta} 
	}
	For $i \in (\mathcal{A} \cap J^+)^c$, $\phi_i \geq  a/b$,  therefore
	\bas{
		\sum_{i \in \mathcal{A}} \phi_i \geq & |\mathcal{A} \cap J^+| (1 - (b-a)e^{-4t\Delta}) + \frac{a}{b} (|\mathcal{A}| -  |\mathcal{A}\cap J^+| ) \\
		=&  |\mathcal{A} \cap J^+| (\frac{b-a}{b} - (b-a)e^{-4t\Delta}) + \frac{a}{b} |\mathcal{A}| 
	}
	
	By Lemmas \ref{lem:berry-e} and \ref{lem:mcd_bound}, we have
	\bas{
		|\mathcal{A} \cap J^+|  =& \sum_{i \in \mathcal{A}} \mathbf{1} [r_i > -s + \Delta]\\
		=&  |\mathcal{A} | \cdot P(r_i > -s + \Delta) + O_P(\sqrt{ |\mathcal{A}| }) \\
		\geq &  |\mathcal{A} | \cdot (1 - \Phi(\frac{-s + \Delta}{\sigma_u}) - C_0 \frac{\rho_u}{\sigma_u^3})  +  O_P(\sqrt{|\mathcal{A}|}).
	} 
	Combining the above,
	\bas{
		\sum_{i \in \mathcal{A}} \phi_i \geq & |\mathcal{A}|  - \frac{b-a}{b}  |\mathcal{A}|  \Phi(\frac{-s + \Delta}{\sigma_u})- C' |\mathcal{A}|  \frac{\rho_u}{\sigma_u^3} - C'' |\mathcal{A}| e^{-4t \Delta} - O_P(\sqrt{ |\mathcal{A}| })
	}
	
	Similarly, define the set $J^- = \{i : r_i < - s- \Delta\}$, $\Delta \geq 0$.  For $i \in \mathcal{A} \cap J^-$,
	\bas{
		\phi_i= \frac {a+e^{4t(s+r_i)}} {b+e^{4t(s+r_i)}} \leq  \frac {a+e^{-4t\Delta}} {b+e^{-4t\Delta}} \leq \frac{a}{b} +  e^{-4t\Delta} 
	}
	For $i \in (\mathcal{A} \cap J^-)^c$, $\phi_i \leq  1$, so
	\bas{
		\sum_{i \in  \mathcal{A}}  \phi_i \leq &  | \mathcal{A} \cap J^-|(\frac{a}{b} + e^{-4t\Delta}) + (| \mathcal{A}| -  | \mathcal{A} \cap J^-| )  \\
		=& | \mathcal{A}| -  | \mathcal{A} \cap J^-| (1-\frac{a}{b} - e^{-4t\Delta}) + O_P(\sqrt{ |\mathcal{A}| })
	}
	By Lemmas \ref{lem:berry-e} and \ref{lem:mcd_bound},
	\bas{
		| \mathcal{A} \cap J^-|  \geq  | \mathcal{A}|\cdot (\Phi(\frac{-s - \Delta}{\sigma_u}) - C_0 \frac{\rho_u}{\sigma_u^3})-  O_P(\sqrt{ |\mathcal{A}| })
	}
	so
	\bas{
		\sum_{i \in \mathcal{A}} \phi_i \leq &  |\mathcal{A}|  - \frac{b-a}{b}  |\mathcal{A}|  \Phi(\frac{-s - \Delta}{\sigma_u}) + C' |\mathcal{A}|  \frac{\rho_u}{\sigma_u^3}+  |\mathcal{A}| e^{-4t \Delta} + O_P(\sqrt{ |\mathcal{A}| })
	}
\end{proof}

\begin{proof}[\textbf{Proof of Theorem \ref{thm: convergence}}]  \hfill 
	Throughout the proof, we assume $A$ has self-loops for convenience, which does not affect the asymptotic results. 
	
	\textbf{Analysis of the first iteration in the first meta iteration:}
	
	For random initialized $u^{(0)}$, the initial signal $|\ip{u^{(0)}}{v_{2}}|=O_P(\sqrt{n})$.  Using the graph split $A^{(0)}$, we write the update of $\theta^{10}$ as
	\ba{
		\theta^{10} =& 4t([6(A^{(0)})^{zz}, 6(A^{(0)})^{zy}] - \lambda J)(u^{(0)} - \half \one_n) \notag\\
		=&\underbrace{ 4t([P^{zz}, P^{zy}] - \lambda J)(u^{(0)} - \half\one_n)}_\text{signal}+ \underbrace{4t [6(A^{(0)})^{zz}-P^{zz}, 6(A^{(0)})^{zy}-P^{zy}](u^{(0)}- \half\one_n), }_{\text{noise }}
		\label{eq:decomp_theta10}
	}
	where $P$ is the population matrix of $A$. Denote $R^{(0)}=6A^{(0)}-P$ and
	$r^{(0)} = [(R^{(0)})^{zz}, (R^{(0)})^{zy}](u^{(0)}- \half\one)$
	Since $P$ has singular value decomposition as $P =  \frac{p+q}{2} \mathbf{1}_n \mathbf{1}_n^T + \frac{p-q}{2} v_2v_2^T $, the signal part is blockwise constant and we can write 
	\ba{
		\theta^{10} = 4t(s_1 \cone + s_2 \ctwo +  r^{(0)}),
		\label{eq:theta10}
	}
	where
	\ba{
		s_1 = &  ( \frac{p+q}{2} -\lambda)(\ip{u^{(0)}}{\mathbf{1}_n}-m)  + (  \frac{p-q}{2}) \ip{u^{(0)}}{v_2} \nonumber\\
		s_2 = & ( \frac{p+q}{2} -\lambda) (\ip{u^{(0)}}{\mathbf{1}_n}-m)  - (  \frac{p-q}{2}) \ip{u^{(0)}}{v_2} 
		\label{eq:s12}
	}
	By \eqref{eq:phixi}, since we initialize with $\theta^{01}, \theta^{11}=0$, the marginal probabilities are updated as
	\ba{
		\phi^{(1)}_1 = \frac{1+e^{\theta^{10}}}{3+e^{\theta^{10}}}, \quad \xi^{(1)}_1 = \frac{2}{3+e^{\theta^{10}}}
		\label{eq:iter0}
	}
	Next we show the signal $|\ip{u}{v_{2}}|$ increases from $O_P(\sqrt{n})$ to $\Omega_P(n\sqrt{\rho_n})$. (We omit the superscript on logits $s$,$x$ and $y$ now for simplicity.) Since 
	\bas{
		\ip{u_1^{(1)}}{v_{2}} = \ip{\phi^{(1)}_{1i}}{v_{21}}  + \ip{\xi^{(1)}_{1i}}{v_{22}}  = \sum_{i \in C_1} \phi_i^{(1)} - \sum_{i \in C_2} \phi_i^{(1)}  + \ip{\xi^{(1)}}{v_{22}}
	}
	we use Lemma \ref{lemma:bounds} to bound $\sum_{i \in C_1} \phi_i^{(1)} $ and $\sum_{i \in C_2} \phi_i^{(1)} $. Since $s_1$ and $s_2$ depends on $u^{(0)}$, we consider two cases conditioning on $u^{(0)}$.

	\textit{Case 1}: $s_1 > s_2$. By Lemma \ref{lemma:bounds}, let $\Delta = \frac{1}{4}(s_1 - s_2)$ with  $\mathcal{A} = C_1, C_2$, $(a,b) = (1,3)$, conditioning on $u^{(0)}$,
	\bas{
		&\sum_{i \in C_1} \phi^{(1)}_{1i} \geq \frac{n}{6}(1 - \Phi(- \frac{s_1 - \frac{1}{4}(s_1 - s_2)}{\sigma_u}) ) + \frac{n}{12} - C' n  \frac{\rho_u}{\sigma_u^3} - C'' n e^{-t(s_1-s_2) } - O_P(\sqrt{n}), \\
		&\sum_{i \in C_2} \phi^{(1)}_{1i} \leq \frac{n}{4} - \frac{n}{6}\Phi(- \frac{s_2 + \frac{1}{4}(s_1 - s_2)}{\sigma_u}) + C' n  \frac{\rho_u}{\sigma_u^3} + C'' n e^{-t(s_1-s_2) } + O_P(\sqrt{n}),
	} 
	where the $O_P(\sqrt{n})$ term can be made uniform in $u^{(0)}$.
	So we have
	\ba{
		\ip{\phi^{(1)}_{1} }{v_{21}} \geq & \frac{n}{6} (\Phi(- \frac{s_2 + \frac{1}{4}(s_1 - s_2)}{\sigma_u}) - \Phi(- \frac{s_1 - \frac{1}{4}(s_1 - s_2)}{\sigma_u}))  \notag\\
		& \qquad \qquad -  C' n  \frac{\rho_u}{\sigma_u^3} - C'' n e^{-t(s_1-s_2) } - O_P(\sqrt{n}) 	\notag\\
		\geq & \frac{n}{6\sqrt{2\pi}}(\frac{s_1 - s_2}{2\sigma_u}) \exp\big({-\frac{s_1^2 \lor s_2^2}{2\sigma_u^2}}\big) 	\notag\\
		& \qquad \qquad - C' n  \frac{\rho_u}{\sigma_u^3} - C'' n e^{-t(s_1-s_2) } - O_P(\sqrt{n}).
		\label{eq:iter1_finalbound}
	}
	Here to approximate the CDF $\Phi$, we have used 
	\begin{align}
	|\Phi(x)-1/2| & = \frac{1}{\sqrt{2\pi}} \int_0^{|x|} e^{-u^2/2}du 	\notag\\ 
	&  \geq \frac{|x|}{\sqrt{2\pi}}e^{-x^2/2}.
	\label{eq:normal_cdf_lb}
	\end{align}
	
	\textit{Case 2}: $s_1 < s_2$.  The same analysis applies with $s_1$ and  $s_2$ interchanged.  
	
	Combining \textit{Case 1} and \textit{Case 2}, for any given $u^{(0)}$, 
	\ba{
		|\ip{\phi^{(1)}_1 }{v_{21}} | & \geq \frac{n}{6\sqrt{2\pi}}(\frac{|s_1 - s_2|}{2\sigma_u}) \exp\big({-\frac{s_1^2 \lor s_2^2}{2\sigma_u^2}}\big) 	\notag\\
		& \qquad \qquad - C' n  \frac{\rho_u}{\sigma_u^3} - C'' n e^{-t|s_1-s_2| } - O_P(\sqrt{n}).
	}
	We note that $|s_1|$, $|s_2|$, $|s_1-s_2|$ are of order $\Omega_P(\sqrt{n}\rho_n)$ by Lemma~\ref{lem:anti_conc}. Also $\sigma^2_u, \rho_u \asymp n\rho_n$,  $e^{-4t|s_1-s_2|}  = \exp(-\Omega(\rho_n \sqrt{n}))$. We can conclude that $ |\ip{\phi^{(1)}_1 }{v_{21}} |  = \Omega_P(n \sqrt{\rho_n})$.
	
	For $\ip{\xi^{(1)}_1}{v_{22}}$ we have
	
	\bas{
		|\ip{\xi^{(1)}_1}{v_{22}}| =& \Big| \sum_{i \in C'_1 } \xi_i^{(1)} - \sum_{i \in C'_2 } \xi_i^{(1)}\Big|  =  \Big|\sum_{i \in C'_2 } \phi_i^{(1)} - \sum_{i \in C'_1 } \phi_i^{(1)} + |C'_1| -|C'_2| \Big|\\
		=& O_P(\sqrt{n})
	}
	Therefore we have $|\ip{u_1^{(1)}}{v_{2}} |= \Omega_P(n \sqrt{\rho_n})$. By \eqref{eq:iter0}, $\ip{u_1^{(1)}}{\one} - m= 0$. %
	
	Due to the symmetry in $s_1$ and $s_2$, WLOG in the following analysis, we assume $\ip{u_1^{(1)}}{v_{2}} > 0$ (equivalently $s_1>s_2$). 
	
	\textbf{Analysis of the second iteration in the first meta iteration:}
	
	Similar to~\eqref{eq:decomp_theta10}, we can write 
	\bas{
		\theta^{01} =& 4t([6(A^{(1)})^{yz}, 6(A^{(1)})^{yy}] - \lambda J)(u_1^{(1)} - \half\one_n) \\
		=&\underbrace{ 4t([P^{yz}, P^{yy}] - \lambda J)(u_1^{(1)} - \half\one_n)}_\text{signal}+ \underbrace{4t (R^{(1)})^{yz}(\phi^{(1)}_1 - \half\one_m) + 4t (R^{(1)})^{yy}(\xi^{(1)}_1 - \half\one_m)}_{\text{noise $\coloneqq 4t r_i^{(1)}$}}.
	}
	Noting the signal part is blockwise constant, we have 
	\bas{
		\theta^{01} = 4t(x_1 \cpone + x_2 \cptwo +  r^{(1)}),
	}
	where
	\bas{
		x_1 = &  ( \frac{p+q}{2} -\lambda)(\ip{u_1^{(1)}}{\mathbf{1}_n}-m)  + (  \frac{p-q}{2}) \ip{u_1^{(1)}}{v_2} \\
		x_2 = & ( \frac{p+q}{2} -\lambda) (\ip{u_1^{(1)}}{\mathbf{1}_n}-m)  - (  \frac{p-q}{2}) \ip{u_1^{(1)}}{v_2}  \\
	}
	By \eqref{eq:iter0}, $\ip{u_1^{(1)} }{\one_n} - m = 0$ and we have
	\bas{
		x_1 = & (  \frac{p-q}{2}) \ip{u_1^{(1)}}{v_2},  \\
		x_2 = & - x_1. 
	}
	It follows then from the first iteration that $x_1,-x_2  = \Omega_P(n \rho_n^{3/2})$. The update for $u_2^{(1)}$ is
	\ba{
		\phi^{(1)}_2 = \frac{1+e^{\theta^{10}}}{2+ e^{\theta^{10}} + e^{\theta^{01}}}, \quad \xi^{(1)}_2 = \frac{1+e^{\theta^{01}}}{2+ e^{\theta^{10}} + e^{\theta^{01}}}
		\label{eq:iter1}
	}
	Since the signal part of $\theta^{10}$ and $\theta^{01}$ are blockwise constant on $C_1$, $C_2$ and $C'_1$, $C'_2$ respectively, $\ip{u_2^{(1)}}{v_{2}}$ can be calculated as
	\bas{
		\ip{\phi^{(1)}_2}{v_{21}} =& \sum_{i \in C_{11}}\frac{1+e^{4t(s_1+ r_i^{(0)})}}{2+e^{4t(s_1+ r_i^{(0)})}+e^{4t(x_1+r_i^{(1)})}}  + \sum_{i \in C_{12}}\frac{1+e^{4t(s_1+ r_i^{(0)})}}{2+e^{4t(s_1+r_i^{(0)})}+e^{4t(x_2+r_i^{(1)})}}  \\
		& - \sum_{i \in C_{21}} \frac{1+e^{4t(s_2+r_i^{(0)})}}{2+e^{4t(s_2+r_i^{(0)})}+e^{4t(x_1+r_i^{(1)})}} - \sum_{i \in C_{22}} \frac{1+e^{4t(s_2+r_i^{(0)})}}{2+e^{4t(s_2+ r_i^{(0)})}+e^{4t(x_2+r_i^{(1)})}}
	}

	\bas{
		\ip{\xi^{(1)}_2}{v_{22}} =& \sum_{i \in C_{11}}\frac{1+e^{4t(x_1+r_i^{(1)})}}{2+e^{4t(s_1+ r_i^{(0)})}+e^{4t(x_1+r_i^{(1)})}}  + \sum_{i \in C_{21}}\frac{1+e^{4t(x_1+r_i^{(1)})}}{2+e^{4t(s_2+r_i^{(0)})}+e^{4t(x_1+r_i^{(1)})}}  \\
		& - \sum_{i \in C_{12}} \frac{1+e^{4t(x_2+ r_i^{(1)})}}{2+e^{4t(s_1+r_i^{(0)})}+e^{4t(x_2+ r_i^{(1)})}} - \sum_{i \in C_{22}} \frac{1+e^{4t(x_2+ r_i^{(1)})}}{2+e^{4t(s_2+r_i^{(0)})}+e^{4t(x_2+r_i^{(1)})}}
	}

	In the case of $\ip{u_1^{(1)}}{v_2} > 0$, we know , $s_1 > s_2$ and $x_1 > 0 > x_2$. 
	We first show that $\ip{\phi^{(1)}_2}{v_{21}}$ is positive by finding a lower bound for the summations over $C_{12}, C_{21}, C_{22}$ (since the sum over $C_{11}$ is always positive).
	
	For the summation over $C_{12}$, note that $|x_2|$ dominates both $s_1$ and $r_i^{(0)}$, $r_i^{(1)}$ by Lemma \ref{lem:uniform_r}, we have 
	\bas{
		\sum_{i \in C_{12}}\frac{1+e^{4t(s_1+ r_i^{(0)})}}{2+e^{4t(s_1+r_i^{(0)})}+e^{4t(x_2+r_i^{(1)})}}  = \sum_{i \in C_{12}}\frac{1+e^{4t(s_1+ r_i^{(0)})}}{2+e^{4t(s_1+r_i^{(0)})}} +  n\exp(-\Omega_P(n\rho_n^{3/2})). 
	}
	To lower bound the first term, we use Lemma \ref{lemma:bounds} by first conditioning on $u^{(0)}$,
	\ba{
		& \sum_{i \in C_{12}}\frac{1+e^{4t(s_1+ r_i^{(0)})}}{2+e^{4t(s_1+r_i^{(0)})}} 	\notag\\
		\geq & \frac{n}{8}\left(1 - \frac{1}{2}\Phi(\frac{-s_1 + \Delta}{\sigma_u}) \right)- C' n \frac{\rho_u}{\sigma_u^3} - C''ne^{-4t\Delta}  - O_P(\sqrt{n}) 
		\label{eq:c12}
	}

	For the summation over $C_{22}$, 
	\ba{
		&\sum_{i \in C_{22}} \frac{1+e^{4t(s_2+r_i^{(0)})}}{2+e^{4t(s_2+ r_i^{(0)})}+e^{4t(x_2+r_i^{(1)})}} \leq \sum_{i \in C_{22}} \frac{1+e^{4t(s_2+r_i^{(0)})}}{2+e^{4t(s_2+ r_i^{(0)})}} 	\notag\\
		\leq & \frac{n}{8}\left( 1 - \frac{1}{2}  \Phi(\frac{-s_2 - \Delta}{\sigma_u}) \right) + C' n \frac{\rho_u}{\sigma_u^3} + C''ne^{-4t\Delta} + O_P(\sqrt{n})
		\label{eq:c22}
	}
	
	For the summation over $C_{21}$, $x_1$ dominates $s_2$ and $r_i^{(0)}, r_i^{(1)}$ by Lemma \ref{lem:uniform_r},
	\ba{
		\sum_{i \in C_{21}} \frac{1+e^{4t(s_2+r_i^{(0)})}}{2+e^{4t(s_2+r_i^{(0)})}+e^{4t(x_1+r_i^{(1)})}} = n\exp(-\Omega_P(n\rho_n^{3/2})).
		\label{eq:c21}
	}
	
	Combining \eqref{eq:c12} - \eqref{eq:c21}, setting $\Delta = \frac{1}{4}(s_1 - s_2)$, we have
	\bas{
		\ip{\phi^{(1)}_2}{v_{21}}
		& \geq  \frac{n}{8}\big[\frac{1}{2}  \Phi(\frac{-s_2 - \Delta}{\sigma_u}) - \frac{1}{2}\Phi(\frac{-s_1 + \Delta}{\sigma_u}) \big] 	- C' n \frac{\rho_u}{\sigma_u^3} - C''ne^{-4t\Delta} - O_P(\sqrt{n}) 	\\
		\geq  &  \frac{n}{16} \frac{1}{\sqrt{2\pi}}(\frac{s_1 - s_2}{\sigma_u} )  \exp\big({-\frac{s_1^2 \lor s_2^2}{2\sigma_u^2}}\big) -  C' n \frac{\rho_u}{\sigma_u^3} - C''ne^{-t(s_1-s_2)} - O_P(\sqrt{n}) 
	}
	by the same argument as \eqref{eq:iter1_finalbound}. As before, we can see that 
	\bas{
		\ip{\phi^{(1)}_2}{v_{21}} = \Omega_P(n \sqrt{\rho_n})
	}
	
	For $\ip{\xi^{(1)}_2}{v_{22}}$,  since $(1+e^x)/(2+e^x)\leq 1/2 + e^x$, we have
	\ba{
		&\sum_{i \in C_{12}} \frac{1+e^{4t(x_2+ r_i^{(1)})}}{2+e^{4t(s_1+r_i^{(0)})}+e^{4t(x_2+ r_i^{(1)})}} + \sum_{i \in C_{22}} \frac{1+e^{4t(x_2+ r_i^{(1)})}}{2+e^{4t(s_2+r_i^{(0)})}+e^{4t(x_2+r_i^{(1)})}} 	\notag\\
		\leq & \frac{n}{8} + \sum_{i \in C'_{2}} e^{4t(x_2 +r_i^{(1)})} + O_P(\sqrt{n}) 	\notag\\
		\leq & \frac{n}{8} + O_P(\sqrt{n}). 
		\label{eq:c12c22_xi}
	}

	For the other two sums, we have 
	\ba{
		\sum_{i \in C_{11}}\frac{1+e^{4t(x_1+ r_i^{(1)})}}{2+e^{4t(s_1+r_i^{(0)})}+e^{4t(x_1+ r_i^{(1)})}} & \geq \frac{n}{8} - O_{P}(\sqrt{n}) - n\exp(-\Omega_P(n\rho_n^{3/2})),	\notag\\
		& \geq \frac{n}{8}- O_P(\sqrt{n})
		\label{eq:c11_xi}
	}
	and 
	\ba{
		\sum_{i \in C_{21}}\frac{1+e^{4t(x_1+r_i^{(1)})}}{2+e^{4t(s_2+r_i^{(0)})}+e^{4t(x_1+r_i^{(1)})}}  \geq \frac{n}{8} - O_P(\sqrt{n})
		\label{eq:c21_xi}
	}
	
	Equations \eqref{eq:c12c22_xi} - \eqref{eq:c21_xi} imply
	\bas{
		\ip{\xi^{(1)}_2}{v_{22}} \geq \frac{n}{8} - O_P(\sqrt{n}).
	}
	
	Therefore $\ip{u_2^{(1)}}{v_2} \geq n/8  - O_P(\sqrt{n})$. Since by \eqref{eq:iter1},  $\phi^{(1)}_2 = \one_m - \xi^{(1)}_2$,  the inner product  $\ip{u_2^{(1)}}{\one} - m = 0$.

	\textbf{Analysis of the third iteration in the first meta iteration:}
	
	Similar to the previous two iterations, we can write 
	\bas{
		\theta^{11} = 4t(y_1 \cone + y_2 \ctwo +y_1 \cpone + y_2 \cptwo +  r^{(2)}),
	}
	where
	\bas{
		y_1 =   (  \frac{p-q}{2}) \ip{u_2^{(1)}}{v_2},  \quad 
		y_2 =   - y_1 
	}
	\bas{
		r^{(2)} = [(R^{(2)})^{zz}, (R^{(2)})^{zy}] (u_2-\half\one_n) + [(R^{(2)})^{yz}, (R^{(2)})^{yy}](u_2^{(1)}-\half\one_n).
	}
	It follows from the second iteration that $y_1, -y_2 = \Omega_P(n\rho_n)$. 
	
	The update for $u_3^{(1)}$ is
	\ba{
		\phi^{(1)}_3 = \frac{e^{\theta^{11}}+e^{\theta^{10}}}{1+ e^{\theta^{10}} + e^{\theta^{01}}+ e^{\theta^{11}}}, \quad \xi^{(1)}_3 = \frac{e^{\theta^{11}}+e^{\theta^{01}}}{1+ e^{\theta^{10}} + e^{\theta^{01}}+ e^{\theta^{11}}}
		\label{eq:iter2}
	}
	The  $\ip{u_3^{(1)}}{v_{2}}$ can be calculated as
	
	\ba{
		& \displaystyle\hspace*{-10pt}\ip{u_3^{(1)}}{v_{2}} =  \notag  \\
		&\displaystyle \hspace*{-10pt} \sum_{i \in C_{11}}\frac{2e^{8t(y_1+ r_i^{(2)})}+e^{4t(x_1+ r_i^{(1)})}+e^{4t(s_1+ r_i^{(0)})}} {1+e^{4t(s_1+ r_i^{(0)})}+e^{4t(x_1+r_i^{(1)})} + e^{8t(y_1+ r_i^{(2)})}}  + \sum_{i \in C_{12}}\frac{e^{4t(s_1+ r_i^{(0)})} - e^{4t(x_2+ r_i^{(1)})}}{1 + e^{4t r_i^{(2)}}+e^{4t(s_1+r_i^{(0)})}+e^{4t(x_2+r_i^{(1)})}} \notag \\
		&\hspace*{-10pt}  +  \displaystyle \sum_{i \in C_{21}} \frac{e^{4t(x_1+ r_i^{(1)})}-e^{4t(s_2+r_i^{(0)})}}{1+ e^{4t r_i^{(2)}} +e^{4t(s_2+r_i^{(0)})}+e^{4t(x_1+r_i^{(1)})}} - \sum_{i \in C_{22}} \frac{2e^{8t(y_2+ r_i^{(2)})}+e^{4t(s_2+r_i^{(0)})} + e^{4t(x_2+ r_i^{(1)})}}{1+e^{4t(s_2+ r_i^{(0)})}+e^{4t(x_2+r_i^{(1)})} + e^{8t(y_2+ r_i^{(2)})}}
		\label{eq:uv2}
	}
	
	Using the order of the $x$ terms and $y$ terms and Lemma \ref{lem:uniform_r}, we can lower bound $\ip{u_3^{(1)}}{v_{2}}$ by
	\ba{
		\ip{u_3^{(1)}}{v_{2}} & \geq \frac{n}{4} +  \sum_{i \in C_{12}}\frac{e^{4t(s_1+ r_i^{(0)})} }{1 + e^{4t r_i^{(2)}} +e^{4t(s_1+r_i^{(0)})}}  + \frac{n}{8}  - \sum_{i \in C_{22}} \frac{e^{4t(s_2+r_i^{(0)})} }{1+e^{4t(s_2+ r_i^{(0)})} } - O_P(\sqrt{n})	\notag\\
		&  \geq \frac{n}{4}- O_P(\sqrt{n}).
		\label{eq:u3_signal_lb}
	}

	Next we bound $\ip{u_3^{(1)}}{\one_n}- m$. 
		\ba{
		& \displaystyle\hspace*{-10pt}\ip{u_3^{(1)}}{\one_n} =  \notag  \\
		&\displaystyle \hspace*{-10pt} \sum_{i \in C_{11}}\frac{2e^{8t(y_1+ r_i^{(2)})}+e^{4t(x_1+ r_i^{(1)})}+e^{4t(s_1+ r_i^{(0)})}} {1+e^{4t(s_1+ r_i^{(0)})}+e^{4t(x_1+r_i^{(1)})} + e^{8t(y_1+ r_i^{(2)})}}  + \sum_{i \in C_{12}}\frac{2e^{4tr_i^{(2)}}+e^{4t(s_1+ r_i^{(0)})} + e^{4t(x_2+ r_i^{(1)})}}{1 + e^{4t r_i^{(2)}}+e^{4t(s_1+r_i^{(0)})}+e^{4t(x_2+r_i^{(1)})}} \notag \\
		&\hspace*{-10pt}  +  \displaystyle \sum_{i \in C_{21}} \frac{2e^{4tr_i^{(2)}}+e^{4t(x_1+ r_i^{(1)})}+e^{4t(s_2+r_i^{(0)})}}{1+ e^{4t r_i^{(2)}} +e^{4t(s_2+r_i^{(0)})}+e^{4t(x_1+r_i^{(1)})}} + \sum_{i \in C_{22}} \frac{2e^{8t(y_2+ r_i^{(2)})}+e^{4t(s_2+r_i^{(0)})} + e^{4t(x_2+ r_i^{(1)})}}{1+e^{4t(s_2+ r_i^{(0)})}+e^{4t(x_2+r_i^{(1)})} + e^{8t(y_2+ r_i^{(2)})}}, 
	\label{eq:uv1}
	}
	Then 
	\bas{
	\ip{u_3^{(1)}}{\one_n} = \frac{3n}{8} + \sum_{i \in C_{12}}\frac{2e^{4tr_i^{(2)}}+e^{4t(s_1+ r_i^{(0)})} }{1 + e^{4t r_i^{(2)}}+e^{4t(s_1+r_i^{(0)})}} + \sum_{i \in C_{22}} \frac{e^{4t(s_2+r_i^{(0)})} }{1+e^{4t(s_2+ r_i^{(0)})} } + O_P(\sqrt{n}),
	}
	\bas{
	\ip{u_3^{(1)}}{\one_n} \geq \frac{3n}{8} - O_P(\sqrt{n}),
	}
	and 
	\bas{
	\ip{u_3^{(1)}}{\one_n} & \leq \frac{3n}{8} +\sum_{i \in C_{12}}\left( \frac{e^{4t r_i^{(2)}}}{1 + e^{4t r_i^{(2)}}+e^{4t(s_1+r_i^{(0)})}} + \frac{e^{4tr_i^{(2)}}+e^{4t(s_1+ r_i^{(0)})} }{1 + e^{4t r_i^{(2)}}+e^{4t(s_1+r_i^{(0)})}} \right)	\\
	&\qquad \qquad + \sum_{i \in C_{22}} \frac{e^{4t(s_2+r_i^{(0)})} }{1+e^{4t(s_2+ r_i^{(0)})} } + O_P(\sqrt{n})	\\
	& \leq   \frac{3n}{4} + O_P(\sqrt{n})
	}
It follows then
\ba{
 -n/8 - O_P(\sqrt{n}) \leq \ip{u_3^{(1)}}{\one_n}- m \leq n/4+O_P(\sqrt{n}).	
\label{eq:u3v1_bound}
}

	\textbf{Analysis of the second meta iteration:}
	
	We first show that from the previous iteration, the signal $\ip{u_3}{v_2}$ will always dominate $|\ip{u_3}{\one_n} - m|$ which gives desired sign and magnitude of the logits. Then we show the algorithm converges to the true labels after the second meta iteration. 
	
	Using the same decomposition as \eqref{eq:theta10}, 
	\ba{
		s_1^{(2)} =&  (\frac{p+q}{2}-\lambda)(\ip{u_3^{(1)}}{\one_n}- m) + \frac{p-q}{2} \ip{u_3^{(1)}}{v_2}	\\
		& \geq -\frac{n}{8}(\frac{p+q}{2}-\lambda) + \frac{n}{4}\cdot \frac{p-q}{2} - o_P(n \rho_n) \notag \\
		& \geq \frac{n}{8} (\lambda-q) - o_P(n \rho_n) \notag \\
		s_2^{(2)} =& (\frac{p+q}{2}-\lambda)(\ip{u_3^{(1)}}{\one_n}- m) - \frac{p-q}{2} \ip{u_3^{(1)}}{v_2}	\\
		& \leq \frac{n}{4}(\frac{p+q}{2}-\lambda) - \frac{n}{4}\cdot \frac{p-q}{2}+ o_P(n \rho_n) \notag \\
		& = - \frac{n}{4}(\lambda-q) + o_P(n \rho_n),
		\label{eq:logits}
	}
	where we have used Lemma~\ref{prop:pq}. 
	
	After the first meta iteration, the logits satisfy 
	\bas{
		&s_1^{(2)}, -s_2^{(2)} =  \Omega_P(n \rho_n), ~~~~~~x_1^{(1)}, -x_2^{(1)}  = \Omega_P(n \rho_n^{\frac{3}{2}}), \\
		 &y_1^{(1)}, -y_2^{(1)} = \Omega_P(n \rho_n). 
	}
Here we have added the superscripts for the first meta iteration for clarity. 

	In the first iteration of the second meta iteration, $\ip{u_1^{(2)}}{v_{2}}$ is computed as \eqref{eq:uv2} with $s_1$ and $s_2$ replaced with $s_1^{(2)}$ and $s_2^{(2)}$ and the noise replaced accordingly. It is easy to see that 
	\ba{
	\ip{u_1^{(2)}}{v_{2}} \geq \frac{3n}{8} -o_P(n).
	}
	Similarly from \eqref{eq:uv1}, 
		\ba{
		-\frac{n}{8}-o_P(n) \leq \ip{u_1^{(2)}}{\one_n}-m \leq o_P(n).
	}

	The logits are updated as $(\frac{p+q}{2}-\lambda) (\ip{u_1^{(2)}}{\one_n} - m) \pm \frac{p-q}{2}\ip{u_1^{(2)}}{v_2}$, so 
	\ba{
		&x_1^{(2)}, -x_2^{(2)} = \Omega_P(n \rho_n),  
		\label{eq:cond2}
	}
	The same analysis and results hold for $ u^{(2)}_2$ and $(y_1^{(2)}, y_2^{(2)})$.
	We now show after the second meta iteration, in addition to the condition \eqref{eq:cond2}, we further have 
	\ba{
		2y_1^{(2)} - s_1^{(2)} = \Omega_P(n \rho_n),\quad 2y_1^{(2)} - x_1^{(2)} = \Omega_P(n \rho_n)
		\label{eq:cond3}
	}
	
	To simplify notation, let
	\bas{
		\alpha_i(s_1, x_1, y_1) \coloneqq \frac{2e^{8t(y_1+ r_i^{(y)})}+e^{4t(x_1+ r_i^{(x)})}+e^{4t(s_1+ r_i^{(s)})}} {1+e^{4t(s_1+ r_i^{(s)})}+e^{4t(x_1+r_i^{(x)})} + e^{8t(y_1+ r_i^{(y)})}}
	}
where $r$'s are the noise associated with each signal and we have Lemma~\ref{lem:uniform_r} bounding their order uniformly. 

	We first provide an upper bound on $\ip{u_3^{(1)}}{v_2}$. In \eqref{eq:uv2},
	\ba{
		\ip{u_3^{(1)}}{v_2} & \leq \frac{n}{4} +  \sum_{i \in C_{12}}\frac{e^{4t(s_1^{(1)}+ r_i^{(0)})} }{1 +e^{4t(s_1^{(1)}+r_i^{(0)})}}  + \frac{n}{8}  - \sum_{i \in C_{22}} \frac{e^{4t(s_2^{(1)}+r_i^{(0)})} }{1+e^{4t(s_2^{(1)}+ r_i^{(0)})} } + O_P(\sqrt{n})  \notag\\
		& \leq \frac{3n}{8} + \frac{n}{8} \left( \Phi(\frac{-s_2^{(1)} + \Delta}{\sigma_u})  - \Phi(\frac{-s_1^{(1)} - \Delta}{\sigma_u})\right)+ C' n \frac{\rho_u}{\sigma_u^3} + C''ne^{-4t\Delta} + O_P(\sqrt{n}) \notag \\
		& \leq \frac{3n}{8} + o_P(n).
\label{eq:u3v2_ub}	
}
by Lemma \ref{lemma:bounds}.

	For $u^{(2)}_1,$ based on \eqref{eq:uv2} and \eqref{eq:uv1},
	\bas{
		&\ip{u^{(2)}_1}{v_2} = \sum_{i \in C_{11}} \alpha_i(s_1^{(2)}, x_1^{(1)}, y_1^{(1)})  + \frac{n}{4} - o_P(n),  \\
		&\ip{u^{(2)}_1}{\one_n} - m =  \sum_{i \in C_{11}} \alpha_i(s_1^{(2)}, x_1^{(1)}, y_1^{(1)})  -  \frac{n}{4} - o_P(n).
	}
Similarly, 
	\bas{
	&\ip{u^{(2)}_2}{v_2} = \sum_{i \in C_{11}} \alpha_i(s_1^{(2)}, x_1^{(2)}, y_1^{(1)})  + \frac{n}{4} - o_P(n),  \\
	&\ip{u^{(2)}_2}{\one_n} - m =  \sum_{i \in C_{11}} \alpha_i(s_1^{(2)}, x_1^{(2)}, y_1^{(1)})  -  \frac{n}{4} - o_P(n).
}
	
For convenience denote $a=\frac{p+q}{2}-\lambda$ and $b=\frac{p-q}{2}$, then we have	
\bas{
2y_1^{(2)} - s_1^{(2)} & = a(2\ip{u^{(2)}_2}{\one_n}-\ip{u^{(1)}_3}{\one_n}-m) + b(2\ip{u^{(2)}_2}{v_2} - \ip{u^{(1)}_3}{v_2})	\\
& \geq a\left( 2\sum_{i \in C_{11}} \alpha_i(s_1^{(2)}, x_1^{(2)}, y_1^{(1)})  - \frac{n}{4} - m \right) \\
&+ b\left(2 \sum_{i \in C_{11}} \alpha_i(s_1^{(2)}, x_1^{(2)}, y_1^{(1)})  + \frac{n}{2}- \frac{3n}{8}\right) -o_P(n\rho_n)	\\
& = 2(a+b) \sum_{i \in C_{11}} \alpha_i(s_1^{(2)}, x_1^{(2)}, y_1^{(1)})  -  \frac{3an}{8} + \frac{bn}{8} -o_P(n\rho_n)
}
by \eqref{eq:u3v2_ub} and \eqref{eq:u3v1_bound}. Since $\alpha_i(s_1^{(2)}, x_1^{(2)}, y_1^{(1)}) \geq 1 + o_P(1)$, we can conclude
\bas{
2y_1^{(2)} - s_1^{(2)} & \geq \frac{3bn}{8} - \frac{an}{8}- o_P(n\rho_n) = \Omega(n\rho_n).
} 

Similarly, we can check that 
\ba{
2y_1^{(2)} - x_1^{(2)} & = a(2\ip{u^{(2)}_2}{\one_n}-\ip{u^{(2)}_1}{\one_n}-m) + b(2\ip{u^{(2)}_2}{v_2} - \ip{u^{(2)}_1}{v_2})	\notag\\
& = (a+b)\sum_{i \in C_{11}} [2\alpha_i(s_1^{(2)}, x_1^{(2)}, y_1^{(1)}) - \alpha_i(s_1^{(2)}, x_1^{(1)}, y_1^{(1)})] -\frac{(a-b)n}{4} +o_P(n\rho_n)	\notag\\
& \geq  \frac{(b-a)n}{4} - o_P(n\rho_n) = \Omega(n\rho_n)
\label{eq:2y_bound}
}
as $\alpha_i(s_1^{(2)}, x_1^{(2)}, y_1^{(1)})>\alpha_i(s_1^{(2)}, x_1^{(1)}, y_1^{(1)}) $. 
Thus condition \eqref{eq:cond3} holds.

Now we need to analyze the third iteration in this meta iteration. Since $\alpha_i(s_1^{(2)}, x_1^{(2)}, y_1^{(1)}) \leq 2$,
	\bas{
		y_1^{(2)} + y_2^{(2)}  = 2a(\ip{u_2^{(2)}}{\one_n} - m) = o_P(n \rho_n),
	}
then by \eqref{eq:logits}
	\ba{
		s_1^{(2)} - (y_1^{(2)} + y_2^{(2)}) = \Omega_P(n\rho_n), \quad x_1^{(2)} - (y_1^{(2)} + y_2^{(2)})  = \Omega_P(n\rho_n).
		\label{eq:cond4}
	}

Now using the update for $u_3^{(2)}$, and defining the noise in the same way as in the first meta iteration, 
	
	\bas{
		\hspace*{-5cm}
		\ip{u_3^{(2)}}{v_{2}} = & \sum_{i \in C_{11}}\frac{2e^{8t(y_1^{(2)}+ r_i^{(5)})}+e^{4t(x_1^{(2)}+ r_i^{(4)})}+e^{4t(s_1^{(2)}+ r_i^{(3)})}} {1+e^{4t(s_1^{(2)}+ r_i^{(3)})}+e^{4t(x_1^{(2)}+r_i^{(4)})} + e^{8t(y_1^{(2)}+ r_i^{(5)})}}  \\  
		&+\sum_{i \in C_{12}}\frac{e^{4t(s_1^{(2)}+ r_i^{(3)})} - e^{4t(x_2^{(2)}+ r_i^{(4)})}}{1 + e^{4t(y_1^{(2)}+ y_2^{(2)}+ r_i^{(5)})}+e^{4t(s_1^{(2)}+r_i^{(3)})}+e^{4t(x_2^{(2)}+r_i^{(4)})}} \notag \\
		&  + \sum_{i \in C_{21}} \frac{e^{4t(x_1^{(2)}+ r_i^{(4)})}-e^{4t(s_2^{(2)}+r_i^{(3)})}}{1+ e^{4t(y_1^{(2)}+ y_2^{(2)}+ r_i^{(5)})} +e^{4t(s_2^{(2)}+r_i^{(3)})}+e^{4t(x_1^{(2)}+r_i^{(4)})}}  \\
		&- \sum_{i \in C_{22}} \frac{2e^{8t(y_2^{(2)}+ r_i^{(5)})}+e^{4t(s_2^{(2)}+r_i^{(3)})} + e^{4t(x_2^{(2)}+ r_i^{(4)})}}{1+e^{4t(s_2^{(2)}+ r_i^{(3)})}+e^{4t(x_2^{(2)}+r_i^{(4)})} + e^{8t(y_2^{(2)}+ r_i^{(5)})}} \\
		\geq & \sum_{i \in C_{11}}\frac{2e^{8t(y_1^{(2)}+ r_i^{(5)})}} {1+e^{4t(s_1^{(2)}+ r_i^{(3)})}+e^{4t(x_1^{(2)}+r_i^{(4)})} + e^{8t(y_1^{(2)}+ r_i^{(5)})}}  \\
		&+ \sum_{i \in C_{12}}\frac{e^{4t(s_1^{(2)}+ r_i^{(3)})} }{1 + e^{4t(y_1^{(2)}+ y_2^{(2)}+ r_i^{(5)})}+e^{4t(s_1^{(2)}+r_i^{(3)})}+e^{4t(x_2^{(2)}+r_i^{(4)})}} \\
		& +\sum_{i \in C_{21}} \frac{e^{4t(x_1^{(2)}+ r_i^{(4)})}}{1+ e^{4t(y_1^{(2)}+ y_2^{(2)}+ r_i^{(5)})} +e^{4t(s_2^{(2)}+r_i^{(3)})}+e^{4t(x_1^{(2)}+r_i^{(4)})}} \\
		&- n\exp(-\Omega_P(n\rho_n))	\\
		& \geq \frac{n}{2}- n\exp(-\Omega_P(n\rho_n)),
	}
	using the conditions \eqref{eq:cond2}~ \eqref{eq:cond3}~ \eqref{eq:cond4} and Lemma \ref{lem:uniform_r}. 
	Since $\norm{u - z^*}_1 = m - |\ip{u}{v_2}|$, $||u_3^{(2)}- z^*||_1 = n\exp(-\Omega_P(n\rho_n))$ after the second meta iteration.
	
Finally we show the later iterations conserve strong consistency. Since   
	\bas{
		\ip{u_3^{(2)}}{\one}-m = & \sum_{i \in C_{11}}\frac{e^{8t(y_1^{(2)}+ r_i^{(5)})}-1} {1+e^{4t(s_1^{(2)}+ r_i^{(3)})}+e^{4t(x_1^{(2)}+r_i^{(4)})} + e^{8t(y_1^{(2)}+ r_i^{(5)})}}  \\  
		&+\sum_{i \in C_{12}}\frac{e^{4t(y_1^{(2)}+ y_2^{(2)}+ r_i^{(5)})}-1}{1 + e^{4t(y_1^{(2)}+ y_2^{(2)}+ r_i^{(5)})}+e^{4t(s_1^{(2)}+r_i^{(3)})}+e^{4t(x_2^{(2)}+r_i^{(4)})}} \notag \\
		&  + \sum_{i \in C_{21}} \frac{e^{4t(y_1^{(2)}+ y_2^{(2)}+ r_i^{(5)})}-1}{1+ e^{4t(y_1^{(2)}+ y_2^{(2)}+ r_i^{(5)})} +e^{4t(s_2^{(2)}+r_i^{(3)})}+e^{4t(x_1^{(2)}+r_i^{(4)})}}  \\
		&+ \sum_{i \in C_{22}} \frac{e^{8t(y_2^{(2)}+ r_i^{(5)})}-1}{1+e^{4t(s_2^{(2)}+ r_i^{(3)})}+e^{4t(x_2^{(2)}+r_i^{(4)})} + e^{8t(y_2^{(2)}+ r_i^{(5)})}}    \\
		= & n\exp(-\Omega_P(n\rho_n))
	}
by \eqref{eq:cond2}~ \eqref{eq:cond3}~ \eqref{eq:cond4} and Lemma \ref{lem:uniform_r}, we have
\bas{
s_1^{(3)} =&  a(\ip{u_3^{(2)}}{\one}-m) + b \ip{u_3^{(2)}}{v_2} = \frac{p-q}{4}n + n\rho_n \exp(-\Omega_P(n\rho_n)),   \\
s_2^{(3)} =&  a(\ip{u_3^{(2)}}{\one}-m) - b \ip{u_3^{(2)}}{v_2} = -\frac{p-q}{4}n + n\rho_n \exp(-\Omega_P(n\rho_n)).
}
Next we note the noise in this iteration now arises from the whole graph $A$, and can be bounded by
\bas{
r_i^{(7)} & = [R^{zz}, R^{zy}]_{i,\cdot}(u_3^{(2)}-\half \one_n) \\
& = [R^{zz}, R^{zy}]_{i,\cdot}(u_3^{(2)}-z^*) + [R^{zz}, R^{zy}]_{i,\cdot}(z^*-\half\one_n),   \\
}
where the second term is $O_P(\sqrt{n\rho_n \log n})$ uniformly for all $i$, applying Lemma \ref{lem:uniform_r}. To bound the first term, note that 
\bas{
\max_i|[R^{zz}, R^{zy}]_{i,\cdot}(u_3^{(2)}-z^*)| & \leq \|[R^{zz}, R^{zy}](u_3^{(2)}-z^*)\|_2 \\
& \leq O_P(\sqrt{n\rho_n}) \|u_3^{(2)}-z^*\|_1 = o_P(1).
}
Therefore $r_i^{(7)}$ is uniformly $O_P(\sqrt{n\rho_n \log n})$ for all $i$.
By a similar calculation to~\eqref{eq:2y_bound}, we can check that condition ~\eqref{eq:cond3}
holds for $y_1^{(2)}$ and $s_1^{(3)}$, since when $s_1, x_1, y_1 = \Omega(n\rho_n)$ condition ~\eqref{eq:cond3} and $1-o_P(1)\leq \alpha_i(s_1,x_1,y_1) \leq 2+o_P(1)$ guarantees each other and condition ~\eqref{eq:cond3} is true in the previous iteration. We can check that condition \eqref{eq:cond4} also holds. The rest of the argument can be applied to show $\|u_1^{(3)}-z^*\|_1 = n\exp(-\Omega_P(n\rho_n))$. At this point, all the arguments can be repeated for later iterations. 

\end{proof} 

\begin{proof}[\textbf{Proof of Corollary~\ref{cor:not-half}}]
		
We first consider $\mu > 0.5$. By \eqref{eq:s12}, $s_1 = \Omega_P(n \rho_n)$, $s_2 = \Omega_P(n \rho_n)$. Since $r_i^{(0)}=O_P(\sqrt{n\rho_n \log n})$ uniformly for all $i$ by Lemma \ref{lem:uniform_r}, we have 
\begin{align*}
	\phi_i^{(1)} = \frac{1 + e^{4t(s_1 + r_i^{(0)})}}{3+  e^{4t(s_1 + r_i^{(0)})}} = 1-\exp(-\Omega_P(n\rho_n))
\end{align*}
for $i\in C_1$. Similarly for $i\in C_2$,  and $\xi_i^{(1)}=\exp(-\Omega_P(n\rho_n))$. Define $u'_i=\one_{[i\in P_1]}+\one_{[i\in P_2]}$. Since the partition into $P_1$ and $P_2$ is random, $u'_i\sim \text{iid Bernoulli}(1/2)$, and $\|u_1-u'\|_2 = \sqrt{n}\exp(-\Omega_P(n\rho_n))$. 

In the second iteration, we can write 
\begin{align*}
\theta^{01} & = \textstyle 4t([A^{yz}, A^{yy}] - \lambda J)(u_1 - \half\one) \\
& = \textstyle 4t([A^{yz}, A^{yy}] - \lambda J)(u_1-u') + 4t([A^{yz}, A^{yy}] - \lambda J)(u'-\half\one)	\\
& = \textstyle O_P(n\sqrt{\rho}\exp(-\Omega_P(n\rho_n)))+ 4t([A^{yz}, A^{yy}] - \lambda J)(u'-\half\one).
\end{align*}
The signal part of the second term is $4t(x_1\one_{C'_1}+x_2\one_{C'_2})$ with $x_1$ and $x_2$ having the form of~\eqref{eq:s12}, with $u^{(0)}$ replaced by $u'$. Since $x_1, x_2 = \Omega_P(\sqrt{n}\rho_n)$, the rest of the analysis proceeds like that of Theorem~\ref{thm: convergence} restarting from the first iteration.

If $\mu < 0.5$, $s_1 = -\Omega_P(n \rho_n)$, $s_2 = -\Omega_P(n \rho_n)$. We have $\phi_i^{(1)}=\frac{1}{3} + \exp(-\Omega_P(n\rho_n))$, $\xi_i^{(1)}=\frac{2}{3} - \exp(-\Omega_P(n\rho_n))$. This time let $u'=\frac{1}{3}\one_{[i\in P_1]}+\frac{2}{3}\one_{[i\in P_2]}$, then $\theta^{01}$ can be written as
\begin{align*}
\theta^{01} = O_P(n\sqrt{\rho}\exp(-\Omega_P(n\rho_n)))+\frac{4t}{3}([A^{yz}, A^{yy}] - \lambda J)(3u'-\frac{3}{2}\one).
\end{align*}
Noting that $3u'_i-1 \sim \text{iid Bernoulli}(1/2)$, the same argument applies.
\end{proof}

\begin{proof}[\textbf{Proof of Proposition~\ref{prop:stat_pt}}]
(i) We show each point is a stationary point by checking the vector update form of \eqref{eq:thetazo}, \eqref{eq:thetaoz}, \eqref{eq:thetaoo}. Similar to Theorem \ref{thm: convergence}, we have 
\bas{
\theta^{10} = 4t(s_1 \cone + s_2 \ctwo +  r_i^{(0)})
}
where $r_i^{(0)} = O_P(\sqrt{n\rho_n \log n})$. Plugging  $u^{(0)} =  \one_n$ in \eqref{eq:thetaoz}, $s_1 = s_2  = 0.5 (\frac{p+q}{2}-\lambda) n$. Similarly 
\bas{
\theta^{01} =& 4t(x_1 \cone + x_2 \ctwo +  r_i^{(1)}) \\
\theta^{11} =& 4t(y_1 \cone + y_2 \ctwo +  r_i^{(1)}) 
}
where $x_1 = x_2  = 0.5 (\frac{p+q}{2}-\lambda) n$, $y_1 = y_2  =  (\frac{p+q}{2}-\lambda) n$. Plugging in \eqref{eq:phixi} with $\frac{p+q}{2}-\lambda = \Omega_P(\rho_n)$ by Lemma \ref{prop:pq}, we have
\bas{
	\phi_i^{(1)}  = 1-\exp(-\Omega_P(n\rho_n)), \quad \xi_i^{(1)}  = 1-\exp(-\Omega_P(n\rho_n))
}
for all $i \in [m]$. Hence for sufficiently large $n$, $u^{(0)} = \one_n$ is the stationary point. For $u^{(0)} = \mathbf{0}_n$, similarly we have
\bas{
\phi_i^{(1)}  = \exp(-\Omega_P(n\rho_n)), \quad \xi_i^{(1)}  = \exp(-\Omega_P(n\rho_n))
}
so $u^{(0)} = \mathbf{0}_n$ is also a stationary point for large n.

(ii)  The statement for $u^{(0)} = \mathbf{0}_n$ and $u^{(0)} = \one_n$ follows  from Corollary \ref{cor:not-half} by $\mu = 0$ and $\mu = 1$. 

\end{proof}

\begin{proof}[\textbf{Proof of Proposition~\ref{cor:pq_noise}}]
	
	Let $\hat{t}, \hat{\lambda}$ be constants defined in terms of $\hat{p}, \hat{q}$. First we observe using $\hat{p}, \hat{q}$ only replaces $t, \lambda$ with $\hat{t}, \hat{\lambda}$ everywhere in the updates of Algorithm~\ref{alg:M1}. We can check the analysis in Theorem ~\ref{thm: convergence} remains unchanged as long as 
	\begin{enumerate}
		\item $\frac{p+q}{2} > \hat{\lambda}$, 
		\item $\hat{\lambda} - q = \Omega(\rho_n)$,
		\item $\hat{t}=\Omega(1)$.
	\end{enumerate}
	
\end{proof}

\begin{proof}[\textbf{Proof of Theorem~\ref{cor:update_pq}}]

Starting with $p^{(0)}$ and $q^{(0)}$ satisfying the conditions in Corollary \ref{cor:pq_noise}, after two meta iterations of $u$ updates, we have $\|u_3^{(2)}-z^*\|_1 = n\exp(-\Omega(n\rho_n))$. Updating $p^{(1)}, q^{(1)}$ with \eqref{eq:hatpq}, we first analyze the population version of the numerator of $p^{(1)}$, 
\bas{
&(\one_n-u)^T P (\one_n-u) + u^T P u + 2(\one_m-\psi^{10}-\psi^{01})^T \text{diag}(P^{zy})\one_m \\
= & (\one_n-z^*)^T P(\one_n-z^*) + (z^*)^T Pz^* - 2(u-z^*)^TP(\one_n-z^*)+2(z^*)^TP( u-z^*)  \\
& \qquad\qquad
+ (u-z^*)^TP(u-z^*) + O(n\rho_n).    \\
}
In the case of $u_3^{(2)}$, the above becomes
\bas{
\frac{n^2}{2}p + O_P(n^{5/2}\rho_n \exp(-\Omega(n\rho_n))) + O(n\rho_n) = \frac{n^2}{2}p + O_P(n\rho_n). 
}
Next we can rewrite the noise as
\bas{
 & (\one_n-u)^T(A-P)(\one_n-u) + u^T(A-P)u \\
= & (\one_n-z^*)^T (A-P) (\one_n-z^*) + (z^*)^T (A-P)z^* - 2(u-z^*)^T(A-P)(\one_n-z^*) \\
& \qquad\qquad
+2(z^*)^T(A-P)( u-z^*) + (u-z^*)^T(A-P)(u-z^*).  
}
Similarly in the case of $u_3^{(2)}$, the above is $O_P(\sqrt{n^2\rho_n})$. Therefore the numerator of $p^{(1)}$ is $\frac{n^2}{2}p + O_P(\sqrt{n^2\rho_n})$. To lower bound the denominator, note that
	\bas{
		&u^T(J-I)u+(\one-u)^T(J-I)(\one-u)	\\
		= & \left( \sum_{i}u_i \right)^2 + \left(n-\sum_{i}u_i\right)^2-u^Tu -(\one-u)^T(\one-u)	\\
		\geq & n^2/2-2n, 
	}
then we have $p^{(1)} = p+O_P(\sqrt{\rho_n}/n)$. The same analysis shows $q^{(1)} = q+O_P(\sqrt{\rho_n}/n)$.

Replacing $p$ and $q$ with $p^{(1)}$ and $q^{(1)}$ in the final analysis after the second meta iteration of Theorem \ref{thm: convergence} does not change the order of the convergence, and the rest of the arguments can be repeated.
\end{proof}

\section{Generalizations}
\label{sec:gen}
We present the \textbf{update equations for balanced $K>2$} models. We will use the notation $a,b\in\{0,\dots, K-1\}$ to be consistent with the two class case.
Let $S_{zy}=2t(\text{diag}(A^{zy})-\lambda I) \mathbf{1}_m$.
\ba{\label{eq:thetaK}
\theta^{ab} =\begin{cases}  \textstyle 2t[A^{zz} - \lambda(J-I)](\phi_a - \phi_0) + 2t[A^{zy}-\lambda(J-I)-\text{diag}(A^{zy})](\xi_a - \xi_0)  - S_{zy},    & \mbox{$a\neq 0, b=0$}\\ 
 \textstyle 2t[A^{zz} - \lambda(J-I)](\phi_b - \phi_0) + 2t[A^{zy}-\lambda(J-I)-\text{diag}(A^{zy})](\xi_b - \xi_0)  - S_{zy},    & \mbox{$a= 0, b\neq 0$}\\ 
\theta_{a0}+\theta_{b0}+S_{zy} & \mbox{$a\neq 0, b\neq 0$ }
\end{cases}
}
The  \textbf{update equations for unbalanced two class blockmodels} simply adds an additional term of $\log \pi/(1-\pi)$ to the updates of $\theta_{10}$ (Eq.~\eqref{eq:thetaoz}), $\theta_{01}$ (Eq.~\eqref{eq:thetazo}) and $2\log \pi/(1-\pi)$ to $\theta_{11}$ (Eq.~\eqref{eq:thetaoo}).
We assume that the proportions are known.

In Figure~\ref{fig:heatmap_a}, we show the heatmap for mis-specified parameters for VIPS on unbalanced SBM $(\pi=.3)$ and balanced SBM with $K=3$. For each starting point of $\hat{p},\hat{q}$ the average NMI is shown. We see that in both cases the VIPS algorithm converges to the correct labels for a wide range of initial parameter settings.  
\begin{figure}[H]
\centering
\begin{tabular}{cc}
\hspace{-0.5cm}
(a)~Unbalanced with $\pi=0.3$& (b)~Balanced with $K=3$ \\
 \includegraphics[width=0.42\textwidth]{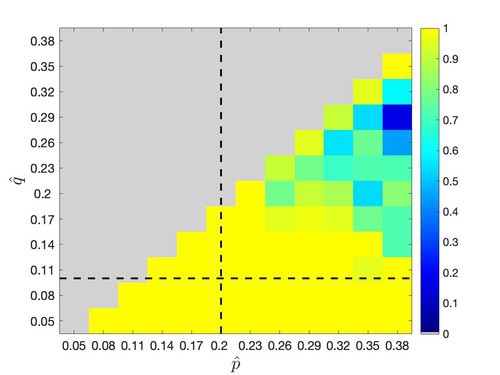}& 
\includegraphics[width=0.42\textwidth]{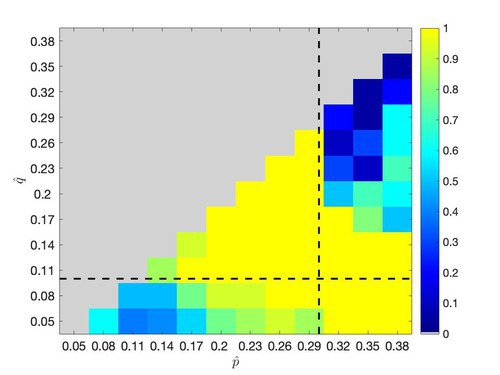}
\end{tabular}
\caption{
NMI with different estimation of $\hat{p}$, $\hat{q}$ with $\hat{p} > \hat{q}$, averaged over 20 random initializations for each $\hat{p}$, $\hat{q}$. The left figure has $\pi=0.3, K=2$ and the right figure has balanced clusters with $K=3$. The true $(p_0, q_0) = (0.2, 0.1)$ and $n=2000$. 
}
\vspace{-3mm}
 \label{fig:heatmap_a}
\end{figure}

For $K=3$, we also show Figure \ref{fig:k3_a}, where each row represents the estimated membership of one random trial and both MFVI and VIPS are run with the true $p_0, q_0$. We show VIPS can recover true membership with higher probability than MFVI.

\begin{figure}[H]
    \centering
\begin{tabular}{cc}
 \hspace{-0.2cm}(a)~MFVI& (b)~VIPS \\ 
  \hspace{-0.2cm}\includegraphics[width=0.42\textwidth]{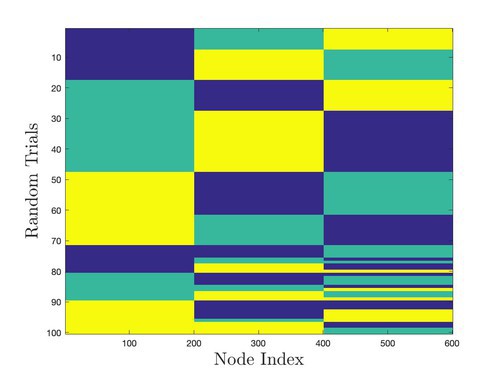} & 
\includegraphics[width=0.42\textwidth]{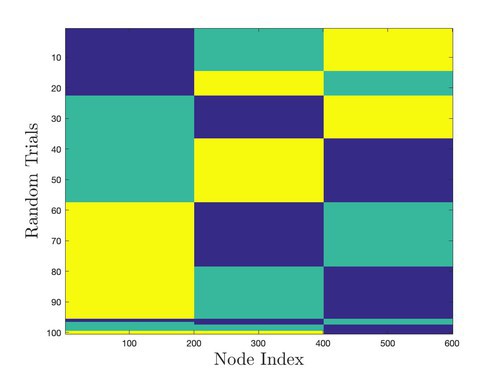} 
\end{tabular}
  \caption {Compare VIPS and MFVI when K=3, equal sized communities, for known $p_0, q_0$ in 100 random trials. $p_0 = 0.5, q_0 = 0.01$. Rows permuted for visual clarity. }
  \label{fig:k3_a}
\end{figure}

\section{Additional Experimental Results}
\label{sec:expaddl}
In Figure \ref{fig:converge1}, we compare different update rules in VIPS with (i) parameters $p,q$ fixed at the true values (True), (ii) $(p,q)$ estimated using $(\sum_{i\neq j}A_{ij}/(n(n-1)),\sum_{i\neq j}A_{ij}/(2n(n-1)) )$ but fixed (Estimate), and (iii) $(p,q)$ initialized as in (ii) and updated in the algorithm (Update) using Eq.~\eqref{eq:hatpq}. In all settings, VIPS successfully converges to the ground truth, which is consistent with our theoretical results and shows robustness of the parameter setting. 
\begin{figure}[H]
\centering
\begin{tabular}{ccc}
\hspace{-0.5cm}
 \includegraphics[width=0.32\textwidth]{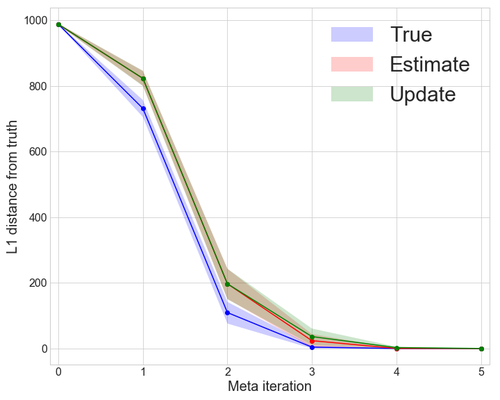}& 
 \hspace{-0.5cm}
 \includegraphics[width=0.32\textwidth]{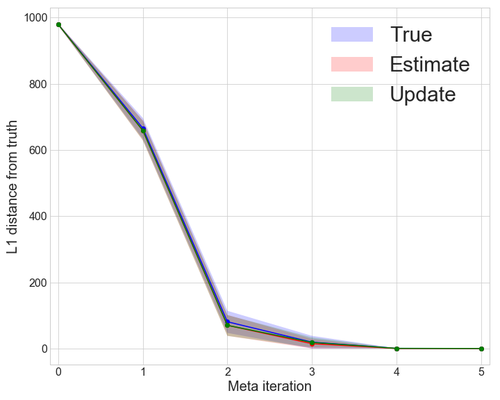}&
 \hspace{-0.5cm}
  \includegraphics[width=0.32\textwidth]{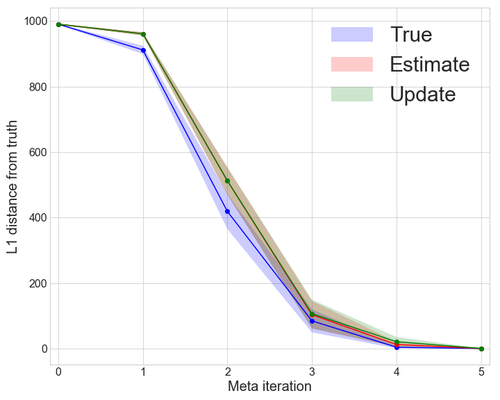}
 \\
\end{tabular}
 \caption{
Values of  $\norm{u - z^*}_1$ as the number of meta iterations increases. Each line is the mean curve of 50 random trials and the shaded area is the standard deviation. Here $n =2000$ and $p_0= 0.1, q_0 = 0.02$. $u$ is initialized by Bernoulli distribution with mean $\mu = 0.1, 0.5, 0.9$ from the left to right. 
}
\label{fig:converge1}
 \end{figure}

In Figure~\ref{fig:compareupdate}, we compare VIPS and MFVI with and without parameter updates. For VIPS, we do parameter updates from 3rd meta iteration onward, and for fairness, we start parameter updates  9 iterations onward for MFVI. In both schemes, the VIPS performs better than MFVI.
\begin{figure}[H]
\centering
\begin{tabular}{cc}
\hspace{-0.5cm}
 \includegraphics[width=0.43\textwidth]{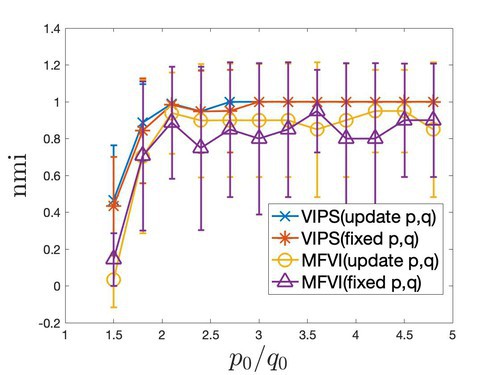}& 
\includegraphics[width=0.43\textwidth]{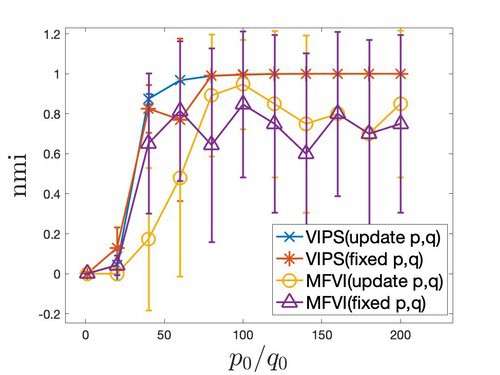}
\end{tabular}
\caption{
\label{fig:compareupdate}Two schemes for estimating model parameters for VIPS and MFVI. Both use the initial $\hat{p}$ and $\hat{q}$ as described in Figure 4 in the main paper. The first scheme starts updating $\hat{p}$ and $\hat{q}$ after 3 meta iterations for VIPS and 9 iterations for MFVI. The other scheme has $\hat{p},\hat{q}$ held fixed. 
}
\end{figure}

\section*{Acknowledgement}
P.S. was partially funded by NSF grant DMS 1713082.
\bibliography{reference.bib}
\bibliographystyle{unsrt}

\end{document}